\newcolumntype{H}{>{\setbox0=\hbox\bgroup}c<{\egroup}@{}}
\newtheorem{theorem}{Theorem}[section]
\newtheorem{lemma}[theorem]{Lemma}
\newtheorem{corollary}[theorem]{Corollary}
\newtheorem{proposition}[theorem]{Proposition}
\newtheorem*{theorem-non}{Theorem}
\theoremstyle{definition}
\newtheorem{definition}[theorem]{Definition}
\newtheorem{setup}[theorem]{Setup}
\newtheorem{remark}[theorem]{Remark}
\title[Toric non-equalized flips associated to $\mathbb{C}^*$-actions]{Toric non-equalized flips associated to $\mathbb{C}^*$-actions}
\author[Barban]{Lorenzo Barban}
\address{Dipartimento di Matematica, Universit\`a degli Studi di Trento, via
Sommarive 14 I-38123 Povo di Trento (TN), Italy}
\email{lorenzo.barban@unitn.it}
\author[Romano]{Eleonora A. Romano}
\address{Dipartimento di Matematica, Universit\`a degli Studi di Genova, via Dodecaneso 35 16146 Genova (GE), Italy}
\email{eleonoraanna.romano@unige.it}
\subjclass[2010]{Primary 14E05, 14M25, 14L30; Secondary 14M17}
\thanks{Second author partially supported by the Grant HA4383/-1 \lq\lq ATAG\rq\rq by the German Science Agency (DFG) and Thematic Einstein Semester \lq\lq Varieties, Polyhedra, Computation\rq\rq by the Berlin Einstein Foundation}
\newcommand\ignore[1]{}
\DeclareMathOperator{\Ext}{Ext}
\newcommand\CC{{\mathbb{C}}}
\newcommand\PP{{\mathbb{P}}}
\newcommand\ZZ{{\mathbb{Z}}}
\def\C{{\mathbb C}}
\def\P{{\mathbb P}}
\def\R{{\mathbb R}}
\def\Z{{\mathbb Z}}
\def\cI{{\mathcal I}}
\def\cN{{\mathcal{N}}}
\def\cO{{\mathcal{O}}}
\def\cQ{{\mathcal{Q}}}
\def\cS{{\mathcal S}}
\def\cU{{\mathcal U}}
\def\cY{{\mathcal Y}}
\def\operatorname#1{\mathop{\rm #1}\nolimits}
\def\Proj{\operatorname{Proj}}
\def\Ext{\operatorname{Ext}}
\def\Hom{\operatorname{Hom}}
\def\Pic{\operatorname{Pic}}
\def\Hom{\operatorname{Hom}}
\def\codim{\operatorname{codim}}
\def\rank{\operatorname{rank}}
\def\deg{\operatorname{deg}}
\newcommand{\pb}{\ar@{}[dr]|{\text{\pigpenfont J}}}
\def\ol{\overline}
\newcommand{\xdasharrow}[2][->]{
\tikz[baseline=-\the\dimexpr\fontdimen22\textfont2\relax]{
\node[anchor=south,font=\scriptsize, inner ysep=1.5pt,outer xsep=2.2pt](x){#2};
\draw[shorten <=3.4pt,shorten >=3.4pt,dashed,#1](x.south west)--(x.south east);
}}
\newcommand\lra{\longrightarrow}
\newcommand{\git}{\mathbin{
		\mathchoice{/\mkern-6mu/}
		{/\mkern-6mu/}
		{/\mkern-5mu/}
		{/\mkern-5mu/}}}
\begin{document}
\begin{abstract} Starting from $\C^*$-actions on complex projective varieties, we construct and investigate birational maps among the corresponding extremal fixed point components. We study the case in which such birational maps are locally described by toric flips, either of Atiyah type or so called \lq\lq non-equalized\rq\rq. We relate this notion of toric flip with the property of the action being non-equalized. Moreover, we find explicit examples of rational homogeneous varieties admitting a $\C^*$-action whose weighted blow-up at the extremal fixed point components gives a birational map among two projective varieties that is locally a toric non-equalized flip. 
\end{abstract}
\maketitle
\tableofcontents
	

\section{Introduction}\label{sec:intro}

The idea of investigating the relationship between birational geometry and torus actions finds its roots in the works of Reid, Thaddeus, W\l odarczyk (see for instance \cite{ReidFlip, Thaddeus1996, Wlodarczyk}). This connection is often enlightened by the study of the local toric geometry involved: W\l odarczyk has indeed used it in order to prove the Weak Factorization conjecture by means of the construction of the \emph{cobordism}: this notion was firstly studied in the toric case by Morelli (see \cite{Morelli}), and allows to understand birational maps of algebraic varieties in terms of quotients of varieties admitting $\C^*$-actions. In the last years, this research has been carried out by, among the others, Occhetta, Romano, Sol\'a Conde, Wi\'sniewski (cf. \cite{RW, WORS1, WORS2, WORS3}). 

This paper takes inspiration in the work of the previous authors and aims to study some results in a broader context. In particular, focusing on complex projective varieties admitting $\C^*$-actions, the main purpose will be to investigate special birational maps arising among some fixed point components, called extremal. We will deal with the case in which such birational maps are locally given by toric flips, which we explicitly describe and call \emph{toric non-equalized flips}.  

\subsection{Overview}
We will consider nontrivial $\C^*$-actions (with coordinate $t\in \C^*$) on a complex smooth projective variety $X$ of arbitrary dimension. Among the irreducible fixed point components, there exist unique smooth varieties $Y_-$ and $Y_+$, called \emph{sink} and \emph{source}, containing the limit of the generic orbit for $t\to \infty$ and $t\to 0$, respectively. All the other fixed point components are called \emph{inner components}. 

Given an ample line bundle $L$ on $X$, and taking a linearization of it, we may associate an integer to every irreducible fixed point component. By ordering increasingly all the possible values occurring, without taking care of the multiplicity, we will obtain a chain of integers $a_0<\ldots< a_r$. The integer $r$ is called  \emph{criticality} of the $\C^*$-action on $X$. Moreover, there is another invariant of the $\C^*$-action on the pair $(X,L)$, called \emph{bandwidth}, obtained as the absolute value of the difference among the weights of the sink and source. 

The birational geometry of varieties admitting $\C^*$-action of small bandwdith has been investigated in \cite{WORS1}, under the assumption that the action is \emph{equalized} (cf. Definition \ref{def:equalized} and Remark \ref{rem:eq}). For instance, in the equalized case of bandwidth 2, after blowing up $X$ along the sink and the source, there exists an \emph{Atiyah flip} among the corresponding exceptional divisors; we refer to \cite[$\S$7]{WORS1} for details. 

In this paper, we will mainly focus on $\C^*$-actions of criticality $2$, and we start extending the previous methods to the case of \emph{non-equalized} actions.
Therefore, this article lays the groundwork to further investigation regarding non-equalized $\C^*$-actions and the birational geometry of the involved varieties. In particular, following the spirit of \cite{WORS1}, we aim to investigate birational maps arising from $\C^*$-varieties, obtained as follows: we consider the weighted blow-up along the sink and the source of the action on $X$, to get a birational map among the corresponding exceptional divisors $\psi: Y^\flat_-\dashrightarrow Y^\flat_+$ (see Lemma \ref{lem:birational_map}). In this setting, we formulate our main result as follows, where we refer to Definition \ref{def:bordism} for the notion of \emph{bordism}.

\begin{theorem-non} \label{thm}
Let $X$ be a smooth projective variety with a faithful $\C^*$-action of criticality $2$. Let $X^\flat$ be the weighted blow-up of $X$ along the sink $Y_-$ and the source $Y_+$, and assume that $X^\flat$ is a bordism. Then $\psi: Y^\flat_-\dashrightarrow Y^\flat_+$ is locally a toric Atiyah flip if and only if the $\C^*$-action on $X^\flat$ is equalized at every inner component. 
\end{theorem-non} 

The purpose of this paper is twofold: on one side we prove the above result, which shows that the local behaviour of the flip can be easily detected by studying if the $\C^*$-action is equalized at every inner component. On the other side, we find explicit examples of birational maps among projective varieties, corresponding to the sink and source of $\C^*$-varieties. In this context, we provide examples of $\C^*$-actions on \emph{smooth projective} varieties whose associated birational map $\psi$ is locally a toric flip, on one hand of Atiyah type (see subsection \ref{Atiyah_example}), and on the other not of Atiyah type, which we will call \emph{toric non-equalized flip} (see subsection \ref{example_noneq}).

\subsection{Contents}

This paper is organized as follows. Section 2 introduces the necessary background regarding $\C^*$-actions on complex smooth projective varieties. 

Section 3 represents the technical core of the paper; it focuses on the interplay between birational geometry and torus actions. We start by recalling the existence of a birational map among the exceptional divisors associated to the sink and the source (see Lemma \ref{lem:birational_map}), and construct the necessary background to deal with the main results of the next section. To this end, we first generalize to the non-equalized case the local toric construction given in \cite[$\S$5]{WORS1}, by introducing the notion of toric non-equalized flip, and we also prove that given two birationally equivalent toric varieties $X_-$ and $X_+$, there exists a new toric variety endowed with a $\C^*$-action, such that the sink and the source are respectively $X_-$ and $X_+$ (see Proposition \ref{toricbordism}). We conclude this section with the description of the weighted blow-up of a $\C^*$-variety along an extremal component, discussing how this generalizes the well-known toric weighted blow-up (see Remark \ref{rem:toric_wb}). 

In Section 4 we show the main Theorem (see also Theorem \ref{thm:main}), and we deduce Corollary \ref{cor:main} which will be used to study the examples in the next section. The proof of these results relies on the local toric construction given in the previous section. We conclude this part by generalizing the main result to the case of arbitrary criticality (see Corollary \ref{cor:maincrit}).

Section 5 is devoted to construct two different classes of examples illustrating the phenomena described above. In the first case, we study a $\C^*$-action on a smooth quadric hypersurface: this is an example of equalized action at the inner component of criticality and bandwidth 2, such that the birational map among the exceptional divisors is locally given by a toric Atiyah flip (see subsection \ref{Atiyah_example}, Proposition \ref{prop:ex1}).
The second example is given by a $\C^*$-action on the Grassmannian of lines of a smooth quadric hypersurface: this action has criticality 2, bandwidth 4, and is not equalized at the inner component, so that the corresponding birational map is locally given by a toric non-equalized flip (see subsection \ref{example_noneq}, Proposition \ref{prop:ex2}).

\noindent\medskip\par{\bf Acknowledgements.} We would like to dedicate this work to Bia{\l}ynicki-Birula: his theory has been our main source of inspiration. We are grateful to Luis Sol\'a Conde for all the stimulating conversations and important suggestions. We would like also to thank Alberto Franceschini for helpful discussions regarding torus actions, and Gianluca Occhetta and Jarek Wi{\'s}niewski for all the ideas coming from previous collaborations. The initial ideas of this work arose during a stay of the second author at Freie Universit\"at Berlin in occasion of the semester \lq \lq Varieties, Polyhedra, Computation\rq \rq. She would like  to thank Klaus Altmann and Karin Schaller for fruitful discussions on toric geometry, and all the organizers for the kind hospitality. 

\section{Preliminaries}\label{sec:prelim}

This section is devoted to recall and collect some background material on $\C^*$-actions. We subdivide the preliminaries into three subsections where corresponding references for further details are given. 

\subsection{Bia{\l}ynicki-Birula decomposition} \label{sub:BB} Let $X$ be a complex smooth projective variety with a nontrivial $\C^*$-action. We denote by $X^{\C^*}$ the fixed locus of such an action, and by $\cY$ the set of the irreducible fixed point components, so that we may write 
$$X^{\C^*}=\bigsqcup_{Y\in \cY}Y.$$ It is known (see \cite{SOM}) that for $x\in X$ the action $\C^*\times
\{x\}\to X$ extends to a holomorphic map $\PP^1\times \{x\}\to X$, hence there
exist $\lim_{t\rightarrow 0}t x$, and $\lim_{t\rightarrow \infty}tx$.
Moreover, since the orbits are locally closed and the closure of an orbit is an
invariant subset, then both the limit points of an orbit lie in $\cY$. We will
call these limits the {\em source} and the {\em sink} of the orbit of $x$,
respectively.
Given $Y\in \cY$, we denote by 
\begin{equation*}\label{eq:BBcells}
	X^+(Y):=\{x\in X\mid \lim_{t\to 0} tx\in Y\}, \hspace{0.3cm}
	X^-(Y):=\{x\in X\mid \lim_{t\to \infty} tx\in Y\}
\end{equation*}
the {\it Bia{\l}ynicki-Birula cells} of the action. 

Moreover, since $X$ is smooth it is well known that every fixed point component $Y\in\cY$ is smooth (cf. \cite[Main theorem]{IVERSEN}), and the normal bundle of $Y$ in $X$ splits into two subbundles on which $\C^*$ acts with positive and negative weights, respectively:
\begin{equation*} \label{equation_normal}
\cN_{Y|X}\simeq \cN^+(Y)\oplus \cN^-(Y).
\end{equation*}

We will set $\nu^\pm(Y):=\rank \cN^\pm(Y)$.

The following result is due to Bia{\l}ynicki-Birula and we will refer to it as the {\em BB-decomposition}. We state it as presented in \cite[Theorem 4.2]{CARRELL}. See
\cite{BB} for the original exposition.
\begin{theorem}
\label{thm:BB_decomposition}
In the situation described above, for every $Y\in \cY$, the following hold:
 \begin{itemize}

\item [(1)] $X^{\pm}(Y)$ are locally closed subsets and there are two decompositions
 $$X=\bigsqcup_{Y\in \cY}X^+(Y)=\bigsqcup_{Y\in \cY}X^-(Y).$$
\item [(2)] There are $\C^*$-equivariant isomorphisms $X^{\pm}(Y)\simeq \cN^{\pm}(Y)$ lifting the natural maps
 $X^\pm(Y)\rightarrow Y$. Moreover, the map $X^\pm(Y)\rightarrow Y$ is algebraic and is a $\CC^{\nu^\pm(Y)}$-fibration.
\end{itemize}
\end{theorem}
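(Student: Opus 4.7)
The plan is to reduce both statements to the local structure of the action near each fixed component, exploiting the fact that a $\C^*$-action on a smooth variety can be linearized near every fixed point.

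First I would establish that the limit maps $\pi^{\pm}\colon X\to X^{\C^*}$ defined by $x\mapsto \lim_{t\to 0}tx$ and $x\mapsto \lim_{t\to \infty}tx$ are well-defined morphisms onto their images. Projectivity of $X$ guarantees that the orbit map $\C^*\to X$ extends to $\PP^1\to X$ for every $x$, so the two limits exist and lie in $X^{\C^*}$; by Sumihiro's theorem the action admits a covering by invariant affine open subsets on each of which $\pi^{\pm}$ is algebraic. Setting $X^{\pm}(Y):=(\pi^{\pm})^{-1}(Y)$ then gives the set-theoretic decompositions in (1), and local closedness follows from a semicontinuity argument based on ordering the fixed components by the weights of a $\C^*$-linearized ample line bundle: the closure of a cell is contained in the union of the cells attached to components of strictly greater (respectively smaller) weight.

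For (2) the strategy is local-to-global. Again by Sumihiro's theorem, I would cover $Y$ by $\C^*$-invariant affine open subsets $U\subset X$; after shrinking, the action can be equivariantly linearized near $Y\cap U$, yielding an equivariant \'etale map between $U$ and the total space of $\cN_{Y|X}|_{Y\cap U}$. Smoothness enters here to guarantee the weight splitting $\cN_{Y|X}\simeq \cN^+(Y)\oplus \cN^-(Y)$. By construction, the positive-weight directions are precisely those along which the flow contracts into $Y$ as $t\to 0$, so restricting the \'etale tubular neighborhood to $X^+(Y)\cap U$ produces an equivariant isomorphism $X^+(Y)\cap U\simeq \cN^+(Y)|_{Y\cap U}$ compatible with the projection to $Y$. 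The case of $X^-(Y)$ is symmetric, exchanging the role of $t\to 0$ with $t\to\infty$.

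The main obstacle will be gluing these local identifications into a global equivariant isomorphism $X^{\pm}(Y)\simeq \cN^{\pm}(Y)$: one has to verify compatibility of the linearizations on overlaps, i.e.\ that $X^{\pm}(Y)\to Y$ is a Zariski-locally trivial affine fibration whose transition data match those of the normal bundle. A clean route is to construct directly a $\C^*$-equivariant morphism from the total space of $\cN^{\pm}(Y)$ to $X^{\pm}(Y)$ by a deformation-to-the-normal-cone style argument, check that it restricts to the identity along $Y$ and is \'etale there, and then invoke an equivariant version of Zariski's main theorem to upgrade it to an isomorphism. Once this is in place, the algebraicity of the projection and its $\CC^{\nu^{\pm}(Y)}$-fibration structure are immediate.
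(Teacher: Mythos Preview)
The paper does not prove this theorem at all: it is quoted as a classical result due to Bia\l{}ynicki-Birula, with a pointer to Carrell's survey and to the original paper \cite{BB}. So there is no ``paper's own proof'' to compare against; your proposal is effectively a sketch of the literature proof rather than of anything done in this article.

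That said, your outline is broadly on the right track and close in spirit to the standard arguments. A couple of places deserve tightening. First, Sumihiro's theorem gives you invariant affine opens on which the $\C^*$-action is \emph{linear} (i.e.\ the coordinate ring is graded by weights); the passage from this to an ``equivariant \'etale tubular neighborhood isomorphic to $\cN_{Y|X}$'' is not automatic and is in fact where most of the content lies. The clean classical route is to work directly with the weight grading of $\cO(U)$: the non-negative weight subalgebra gives the retraction $X^+(Y)\cap U\to Y\cap U$, and the positively graded ideal identifies the fibers with $\cN^+(Y)$. Second, your proposed ``deformation to the normal cone plus equivariant Zariski main theorem'' approach to gluing is plausible but nonstandard; the usual proofs avoid this by constructing the isomorphism $X^{\pm}(Y)\to \cN^{\pm}(Y)$ globally from the graded pieces of the ideal sheaf of $Y$, which sidesteps the overlap-compatibility issue you flag.
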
 

\begin{remark}\label{rem:sinksource}
As a consequence of Theorem \ref{thm:BB_decomposition} (1), there exists a unique component $Y_+\in \cY$ (resp. $Y_-\in \cY$) such that $X^+(Y_+)$ (resp. $X^-(Y_-)$) is a dense open set in $X$. We will call $Y_+$ and $Y_-$ the \textit{source} and the \textit{sink} of the action, and refer to them as the \textit{extremal fixed components} of the action. We will call  \textit{inner components}  the fixed point components which are not extremal.
\end{remark}

\subsection{Linearizations of $\C^*$-actions}  Given a nontrivial $\C^*$-action on a complex smooth projective variety $X$ as in the previous section, we will also consider a linearization $\mu_L$ of the $\C^*$-action on an ample line bundle $L\in\Pic(X)$; note that linearizations exist for every line bundle on a normal projective variety (cf. \cite{KKLV}). We refer the reader to \cite[$\S$2.3]{RW} and references therein for details on linearized line bundles. We then consider the action of $\C^*$ on the pair $(X,L)$. 
Moreover, we will choose an isomorphism of the lattice of characters of $\C^*$ with $\Z$, so that to every fixed point component $Y\in \cY$ is assigned a weight $\mu_L(Y)\in\Z$. 
Let us consider these weights $\mu_L(Y)$, for $Y\in \cY$, and rearrange them in an increasing order, taking care to consider them without multiplicity (so that if there are two or more components $Y,Y'\in \cY$ such that $\mu_L(Y)=\mu_L(Y')=a$, we consider the weight $a$ only one time): we will obtain a chain of the form
$$a_0< a_1 < \ldots < a_r.$$
We define the \emph{criticality} of the $\C^*$-action on a pair $(X, L)$ as the integer $r$. This notion has been introduced in \cite{WORS3}; we refer to that paper for recent results on $\C^*$-actions on pairs $(X,L)$, depending on their criticality.  

Using that $L$ is ample, it easily follows that the minimal and the maximal value of $\mu_L$ are achieved at the sink and the source of the action. The {\em bandwidth} of the $\C^*$-action on $(X,L)$ is defined as the difference between the maximal and the minimal value of $\mu_L$. Note that two linearizations of $L$ differ by a character, hence the above definitions are well-defined. We stress that, while the concept of bandwidth and criticality may look similar, one may easily find examples of $\C^*$-actions on polarized pairs where the criticality and the bandwidth do not coincide (see for example $\S$5.2, in particular Lemma \ref{lem:prelim_ex2} and the subsequent Remark).

We conclude this section by recalling the AM vs FM equality, which has been introduced in \cite[$\S$2.A]{RW}, and relates the amplitude of a line bundle on $\PP^1$ with the weights of the action on the fibers of the line bundle over the fixed points. 

\begin{lemma}\label{lem:AMvsFM} \cite[Lemma 2.2]{RW}
Let $\C^*\times\PP^1\rightarrow\PP^1$ be an
action with source and sink $y_+$ and $y_-$. Consider a line bundle $L$
over $\PP^1$ with linearization $\mu_L$. Then
\begin{equation}\tag{AM vs FM}
\mu_{L}(y_+)-\mu_{L}(y_-)=\delta(y_+)\deg{L}
\end{equation}
where $\delta(y_+)$ is the weight of the action on the tangent space $T_{y_+}\PP^1$.
\end{lemma}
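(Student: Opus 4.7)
The plan is to reduce to a direct local computation on $\PP^1$, exploiting the fact that every $\C^*$-equivariant line bundle on $\PP^1$ is determined up to twist by a character of $\C^*$ by its degree, and that twisting the linearization by a character adds the same integer to both $\mu_L(y_+)$ and $\mu_L(y_-)$, leaving their difference invariant. In particular, both sides of the proposed identity depend only on $\deg L$ and on $\delta(y_+)$, so the identity is really an equality between two linear functions of these two integers; checking it on $\cO_{\PP^1}(1)$ with a single explicit linearization already settles the general case, but the transition-function argument below handles all $d$ at once.

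First, I would normalize coordinates: choose homogeneous coordinates $[x_0:x_1]$ on $\PP^1$ with $y_+=[1:0]$ and $y_-=[0:1]$. Every nontrivial $\C^*$-action fixing these two points can, after rescaling $t$ if necessary, be written as $t\cdot[x_0:x_1]=[x_0:t^{a} x_1]$ for some nonzero $a\in\Z$. In the affine chart $z=x_1/x_0$ around $y_+$ this reads $t\cdot z=t^{a}z$, whence $a=\delta(y_+)$, and in the chart $w=x_0/x_1$ around $y_-$ it reads $t\cdot w=t^{-a}w$.

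Next, identify $L\cong\cO_{\PP^1}(d)$ with $d=\deg L$ and use the standard trivializations $e_0=x_0^d$ on $\{x_0\neq 0\}$ and $e_1=x_1^d$ on $\{x_1\neq 0\}$, glued by the transition $e_1=z^{d}e_0$ on the overlap. A linearization of $L$ is locally of the form $t\cdot e_0=t^{c_+}e_0$ and $t\cdot e_1=t^{c_-}e_1$, where $c_\pm$ encode the weights of $\C^*$ on the fibres $L_{y_\pm}$, and hence equal $\mu_L(y_\pm)$ up to the overall sign convention implicit in the paper's definition of the weight of a linearization on a fibre. Imposing compatibility of the action with the transition function, $t\cdot e_1=t\cdot(z^{d}e_0)$, gives $t^{c_-}z^{d}e_0=(t^{a}z)^{d}\cdot t^{c_+}e_0$, i.e.\ $c_- - c_+ = a\cdot d = \delta(y_+)\cdot\deg L$. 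Translating back through the sign convention yields exactly the claimed identity $\mu_L(y_+)-\mu_L(y_-)=\delta(y_+)\deg L$.

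The only genuine obstacle is careful book-keeping of sign conventions, namely whether $\mu_L(y)$ denotes the weight of $\C^*$ on $L_y$ or on its dual $L_y^{\vee}$, and which of $[1:0],[0:1]$ is labelled as the source under the convention that source means $\lim_{t\to 0}$. The geometric content itself reduces to the single transition-function equation above, so no deeper difficulty is expected.
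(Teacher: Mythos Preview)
The paper does not give its own proof of this lemma: it is simply quoted from \cite[Lemma~2.2]{RW}, so there is nothing to compare against. Your argument is the standard one and is correct. The only point that deserves a second look is the sign in the final step. With your normalization $t\cdot[x_0:x_1]=[x_0:t^{a}x_1]$ and $a=\delta(y_+)$, the induced linearization on $\cO(d)$ coming from the linear action on $\C^2$ has weight $0$ on the fibre over $y_+=[1:0]$ and weight $-ad$ on the fibre over $y_-=[0:1]$; hence $\mu_L(y_+)-\mu_L(y_-)=0-(-ad)=\delta(y_+)\deg L$ on the nose, with the convention (used in the paper, cf.\ the computation $\mu_L(Y_\pm)=\pm 1$ in the proof of Proposition~\ref{bnbandwidth}) that $\mu_L(y)$ is the weight of $\C^*$ on the fibre $L_y$. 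In your transition-function computation this corresponds to $\mu_L(y_+)=c_+$ and $\mu_L(y_-)=c_-$, so the line ``$c_--c_+=a\cdot d$'' should read $c_+-c_-=a\cdot d$ once the action on the frame $e_i$ is interpreted consistently; your caveat about sign conventions covers this, but it is worth pinning down rather than leaving as a disclaimer.
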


\subsection{Equalized, non-equalized $\C^*$-actions, and bordisms} Let us keep the notation introduced in subsection \ref{sub:BB}. We recall some notions from \cite{RW, WORS1}.


\begin{definition} \label{def:equalized}
We say that the action of $\C^*$ on $X$ is {\em  equalized} at $Y\in\cY$ if the weights of the action on $\cN^{\pm}(Y)$ are all equal to $\pm 1$. Otherwise we say that the action is {\em  non-equalized} at $Y$.

If the $\C^*$-action is equalized at every fixed point component, we will say that the action is equalized.
\end{definition}
\begin{remark} \label{rem:eq} Notice that Theorem \ref{thm:BB_decomposition} (2) gives a geometric insight on the notion of equalization. Indeed, given $Y\in \cY$ the isomorphism $X^{\pm}(Y)\simeq \cN^{\pm}(Y)$ sends one-dimensional orbits to curves in the fibers of $\cN^{\pm}(Y)\to Y$. Take a point $y\in Y$, and let $C$ be the closure of an orbit having sink (or source) in $y$. Consider its normalization $f\colon \P^1\to C$; then $\C^*$ acts on $\P^1$ with sink $z_-$ (and source $z_+$) such that $f(z_-)=y$ (respectively, $f(z_+)=y$). We may then identify the weights of the action on $\cN^{-}(Y)$ (resp. of $\cN^{+}(Y)$) with the weights of the induced $\C^*$-action on $T_{\P^1,z_{-}}$ (resp. $T_{\P^1,z_{+}}$). Therefore, the images of the closures of one-dimensional orbits into $\cN^{\pm}(Y)$ are all lines if and only if the $\C^*$-action is equalized. This also says that the $\C^*$-action is equalized if and only if for every $Y\in \cY$ and $x\in (X^{-}(Y)\cup X^{+}(Y))\setminus Y$ the isotropy group of the $\C^*$-action on $x$ is trivial (see also \cite[Corollary 2.9]{WORS1}).
\end{remark}

\begin{definition}\label{def:bordism}
A {\em B-type action} is a $\C^*$-action on $X$ whose extremal fixed point components have codimension one. A {\em bordism} is a B-type action such that  $\nu^\pm(Y)\geq 2$ for every inner component $Y$. 
\end{definition}



\section{Birational Geometry arising from $\mathbb{C}^*$-actions}\label{sec:birgeo}
This section is devoted to explain the technical tools which we will need to prove the main results, involving toric flips arising from $\C^*$-actions.  
The first step in this direction is the construction introduced in \cite{WORS1} of a special birational map between the source and the sink of a B-type $\C^*$-action on a smooth projective variety $X$. 
Let $Y_+$ and $Y_-$ be the source and the sink of such an action, respectively. For any $Y\in \cY$, let us consider the subvarieties $Z_-(Y):=\ol{X^+(Y)}\cap Y_{-}\subset Y_{-}$, $Z_+(Y):=\ol{X^-(Y)}\cap Y_{+}\subset Y_{+}$ and set: 
\begin{equation} \label{equation_Z}
	Z_-:=\bigcup_{Y\in \cY}Z_-(Y),\quad  Z_+:=\bigcup_{Y\in \cY}Z_+(Y).
\end{equation}

\begin{lemma} \cite[Lemma 3.4]{WORS1} \label{lem:birational_map} \label{lem:birational_map_b} Let $X$ be a complex smooth projective variety with a B-type $\C^*$-action. 
	Then there exists an isomorphism:
	$$
	\psi:Y_{-}\setminus Z_{-}\lra Y_{+}\setminus Z_+,
	$$
	assigning to every point $x$ of $Y_{-}\setminus Z_{-}$ the limit for $t \to 0$ of the unique orbit having limit for $t \to \infty$ equal to $x$.
\end{lemma}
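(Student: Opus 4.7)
The plan is to realize $\psi$ as arising from a pair of geometric quotients on a common $\C^{*}$-invariant open subset of $X$. Set
\[
W \;:=\; X^{-}(Y_{-}) \,\cap\, X^{+}(Y_{+}),
\]
which is open in $X$ by Theorem~\ref{thm:BB_decomposition} since $Y_{\pm}$ are the extremal fixed components (Remark~\ref{rem:sinksource}); it is $\C^{*}$-invariant, disjoint from the fixed locus, and $\C^{*}$ acts on $W$ with one-dimensional orbits. Taking sinks and sources produces two $\C^{*}$-invariant morphisms $s \colon W \to Y_{-}$ and $\sigma \colon W \to Y_{+}$. My plan is to prove that both $s$ and $\sigma$ are geometric $\C^{*}$-quotients onto their images, and that these images equal $Y_{-} \setminus Z_{-}$ and $Y_{+} \setminus Z_{+}$ respectively; then $\psi := \sigma \circ s^{-1}$ is the desired isomorphism, and its pointwise description coincides with the statement.

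The B-type hypothesis enters at the heart of the argument. Since $Y_{\pm}$ have codimension one, Theorem~\ref{thm:BB_decomposition}(2) identifies $X^{\pm}(Y_{\pm})$ with the total spaces of the line bundles $\cN^{\pm}(Y_{\pm})$ over $Y_{\pm}$. Hence for each $x \in Y_{-}$ the fiber of $X^{-}(Y_{-}) \to Y_{-}$ over $x$ is an affine line $\CC$ on which $\C^{*}$ acts with a single nonzero weight, so the complement of the zero point (which is $x$ itself) is a single $\C^{*}$-orbit $O_{x}$, the unique orbit in $X$ whose sink equals $x$. Intersecting with $W$, the fiber of $s$ over $x$ is an open $\C^{*}$-invariant subset of $O_{x}$, hence either empty or all of $O_{x}$; the analogous statement holds for $\sigma$. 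This yields a well-defined isomorphism $\psi \colon s(W) \xrightarrow{\sim} \sigma(W)$ sending $x$ to the source of the unique orbit in $W$ with sink $x$.

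It remains to identify the images as $s(W) = Y_{-} \setminus Z_{-}$ and, symmetrically, $\sigma(W) = Y_{+} \setminus Z_{+}$. For $x \in Y_{-}$, the orbit $O_{x}$ lies in a unique BB-cell $X^{+}(Y')$ with $Y' \in \cY$, and $x \in s(W)$ precisely when $Y' = Y_{+}$; otherwise $Y'$ is an inner component and $O_{x} \subset X^{+}(Y')$ forces $x \in \overline{X^{+}(Y')} \cap Y_{-} = Z_{-}(Y')$, giving the inclusion $s(W) \subseteq Y_{-} \setminus Z_{-}$. The main obstacle I foresee is the reverse inclusion: starting from $x \in Z_{-}(Y')$ with $Y'$ inner, one must rule out the possibility that the source of $O_{x}$ equals $Y_{+}$. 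My plan is to combine the $\C^{*}$-invariance and closedness of $\overline{X^{+}(Y')}$ with the monotonicity of an ample linearization $\mu_{L}$ along BB-closures (a consequence of Lemma~\ref{lem:AMvsFM}) to show that the entire fiber $\CC$ of $X^{-}(Y_{-}) \to Y_{-}$ over $x$ is contained in $\overline{X^{+}(Y')}$; then the source of $O_{x}$ lies in a fixed component $Y'' \subseteq \overline{X^{+}(Y')}$ with $\mu_{L}(Y'') \le \mu_{L}(Y') < \mu_{L}(Y_{+})$, excluding $Y'' = Y_{+}$ and completing the identification.
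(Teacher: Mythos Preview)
The paper does not supply its own proof of this lemma; it is quoted from \cite[Lemma~3.4]{WORS1} and used as a black box, so there is no argument in the present paper to compare your proposal against. Your approach---realizing $\psi$ via the geometric quotients of $W=X^{-}(Y_{-})\cap X^{+}(Y_{+})$ and using the B-type hypothesis to see that each fiber of $X^{-}(Y_{-})\to Y_{-}$ is a single orbit---is the natural one and is presumably close to what \cite{WORS1} does.

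Two comments on the write-up. First, the inclusions in your third paragraph are swapped: the implication ``if the source of $O_{x}$ lies in an inner $Y'$ then $x\in Z_{-}(Y')$'' is the contrapositive of $Y_{-}\setminus Z_{-}\subseteq s(W)$, not of $s(W)\subseteq Y_{-}\setminus Z_{-}$. Second, for the remaining inclusion your proposed step---deducing that the entire fiber over $x$ lies in $\overline{X^{+}(Y')}$ from the fact that $\overline{X^{+}(Y')}$ is closed, $\C^{*}$-invariant, and contains $x$---does not follow as stated: a closed $\C^{*}$-invariant subset meeting the zero section of a line bundle at $x$ need not contain the fiber over $x$ (the zero section itself is such a set). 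You need a sharper argument here; one route is to use that $X^{+}(Y')\cap Y_{-}=\emptyset$, so that $\overline{X^{+}(Y')}$, being irreducible and not contained in $Y_{-}$, must meet $X^{-}(Y_{-})$ in a set whose generic point lies off the zero section, and then analyze the induced closed invariant subset of the line bundle more carefully.
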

When the $\C^*$-action on $X$ is a bordism, applying \cite[Lemma 3.9]{WORS1} we know that the birational map $\psi\colon Y_-\dashrightarrow Y_+$ introduced in the above Lemma is an isomorphism in codimension one. On the other hand, under the assumption that the $\C^*$-action on $X$ is equalized at $Y_{-}$ and $Y_{+}$, it has been studied when the blow-up at the sink and at the source gives rise to a variety whose extremal fixed components are isomorphic in codimension one (see \cite[Lemma 3.10]{WORS1}). We will investigate what happens in the more general case of a non-equalized $\C^*$-action. To this end, we will first present toric flips that we call \emph{non-equalized}, and discuss the toric bordism construction in subsection \ref{sec:toric_bordism}. Finally, we will introduce some preliminaries on weighted blow-ups in subsection \ref{sec:weighted}. 

\subsection{Toric bordism and non-equalized flips} \label{sec:toric_bordism} In this subsection we introduce a  generalization of the notion of \emph{toric bordism} presented in \cite[Definition 5.8]{WORS1}. We will use this construction to describe locally the birational map introduced in Lemma \ref{lem:birational_map}. We refer to \cite{CLS} for the proofs of the results we will use. See \cite{ReidToric, JW-Toric} for a detailed study on toric flips, \cite{MFK} for details on GIT theory and the definition of good and geometric quotients, and \cite{Wlodarczyk, Morelli} for the Morelli-W\l odarczyk cobordism, which we recall in the following:

\begin{definition}\label{def: cobordism}
	Let $X_1,X_2$ be birationally equivalent normal varieties. The \emph{Morelli--W\l odarczyk cobordism} between $X_1$ and $X_2$ is a normal variety $B$, endowed with a $\C^*$-action such that 
	\begin{equation*}
		\begin{split}
			B_-&=\{p\in B\mid \lim_{t\to 0} tp \text{ does not exists}\},\\
			B_+&=\{p\in B\mid \lim_{t\to \infty} tp \text{ does not exists}\}
		\end{split}
	\end{equation*}
	are non-empty open subsets of $B$, such that there exist geometric quotients $B_+/\C^*$, $B_-/\C^*$ satisfying
	$$B_+/\C^*\simeq X_1 \dashrightarrow X_2\simeq B_-/\C^*,$$
	where the birational equivalence is realized by the open subset $$\mathcal{V}=(B_+\cap B_-)/\C^*\subset B_{\pm}/\C^*.$$
\end{definition}

We now recall the toric version of the Morelli-W\l odarczyk cobordism.

\begin{setup}\label{setupcobordism}
	Consider a lattice $N$ of rank $n+1$, generated by the canonical basis of $\R^{n+1}$ which we will denote by $e_1,\ldots,e_{n+1}$, and consider $d_1, d_2\in \mathbb{N}$ such that $1 < d_1 \leq d_2 < n+1$.
	Consider the simplicial cone $\delta= \langle e_1,\ldots,e_{n+1}\rangle,$ which determines the affine toric variety $X_\delta=\C^{n+1}$. 
	
	Let $q_1,\ldots,q_{d_1},q_{d_2+1},\ldots,q_{n+1}$ be positive integers, which, up to dividing for their greatest common divisor, we may assume to be coprime, and consider the faithful action of $\C^*$ on $X_\delta$ given by, for $t\in\C^*$ and $p=(p_1,\ldots,p_{n+1})\in X_\delta$, 
	$$\C^*\times\C^{n+1}\to \C^{n+1}$$
	$$(t,p)\mapsto (t^{-q_1}p_1,\ldots,t^{-q_{d_1}}p_{d_1}, p_{d_1+1},\ldots,p_{d_2}, t^{q_{d_2+1}}p_{d_2+1},\ldots,t^{q_{n+1}}p_{n+1}).$$
	This action has an associated $1$-parameter subgroup $v\in N$ of the form
	$$v=(-q_1,\ldots,-q_{d_1},0,\ldots,0,q_{d_2+1},\ldots,q_{n+1}).$$
\end{setup} 
For simplicity, we will denote the coordinate of a point $p\in \C^{n+1}$ by $$(p_1,\ldots,p_{n+1})=(p_-,p_0,p_+)$$ in order to stress on which coordinates $\C^*$ acts with negative (resp. zero, positive) weights.

Let us also consider the quotient lattice $\overline{N}=N/\Z v$, and denote by $\pi: N\to \overline{N}$ the natural projection; we will continue to denote by $e_i$ their image under $\pi$. 

In order to make this discussion more clear, we will develop an intuitive picture-like description of this construction: for now, we describe $X_{\delta}$ as follows:

\begin{center}
	\includegraphics[scale=0.08]{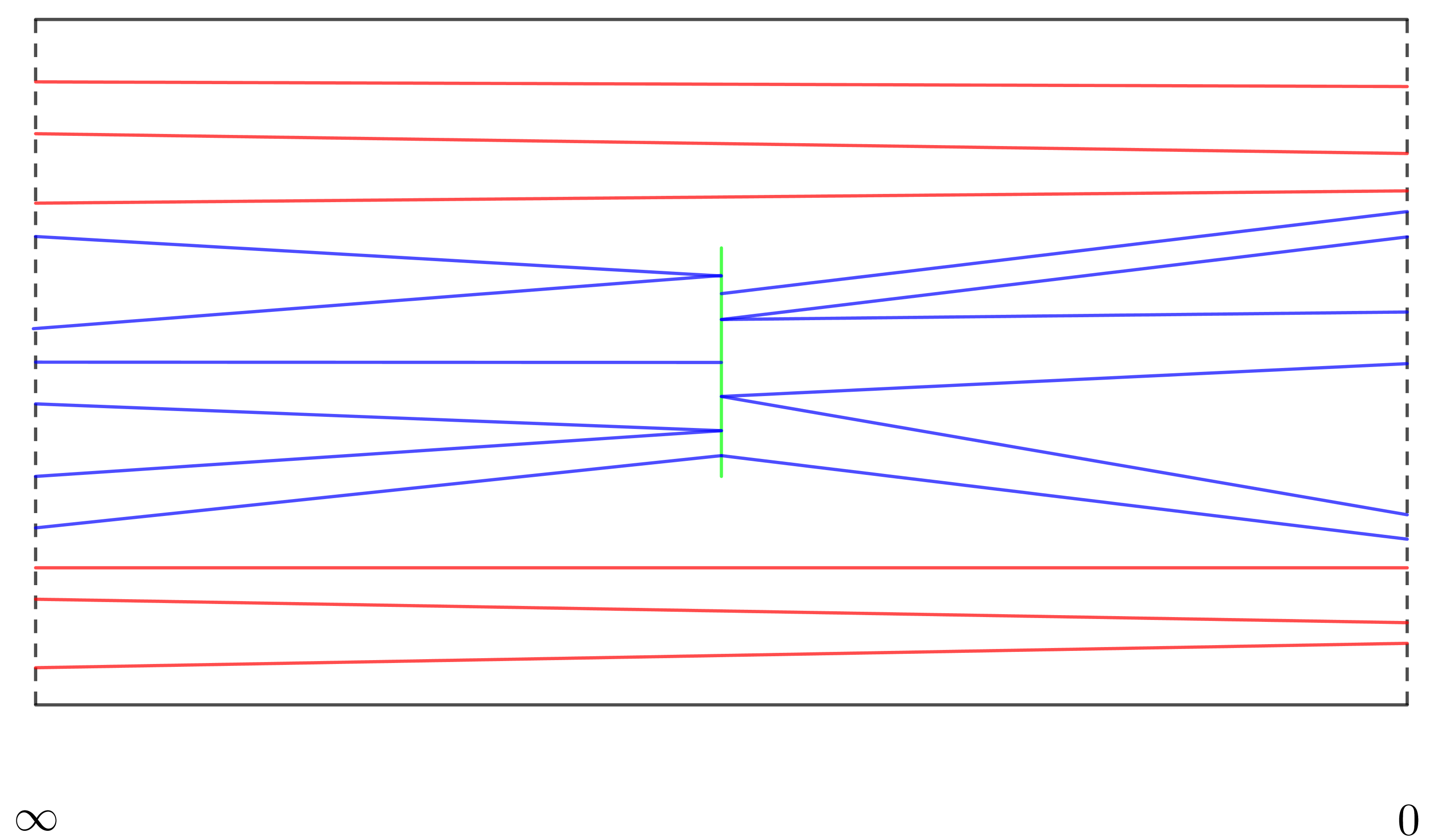}
\end{center}
The central component represents the fixed point component $\C^{d_2-d_1}$, the horizontal lines are the orbits of the points, and the left and right side correspond to the limit of the orbits for $t\to \infty$ (respectively $t\to 0$), for $t\in\C^*$.

\begin{remark}{\cite[Lemma p. 264]{JW-Toric}}\label{remark:affinegit}
	The toric variety whose associated cone is the image $\overline{\delta}=\pi(\delta)\subset \overline{N}$, is the good quotient $\C^{n+1}\git \C^*$ of $X_{\delta}$ under the $\C^*$-action of Setup \ref{setupcobordism}.
\end{remark}

\begin{proposition} \label{prop:description_B}
	Let us consider the $\C^{*}$-action on $\C^{n+1}$ described in Setup \ref{setupcobordism}. Then the open subsets $B_+,B_-$ of Definition \ref{def: cobordism} correspond to 
	\begin{equation*}
		\begin{split}
			B_+&=\{p=(p_-,p_0,p_+)\in \C^{n+1}\mid p_+\neq (0,\ldots,0)\}=\C^{n+1}\setminus \{(p_-,p_0,0)\}, \\
			B_-&=\{p=(p_-,p_0,p_+)\in \C^{n+1}\mid p_-\neq (0,\ldots,0)\}=\C^{n+1}\setminus \{(0,p_0,p_+)\}.\\
		\end{split}
	\end{equation*}
	Moreover $B_+,B_-$ are toric varieties, whose associated fans in $N$, which we will denote by $\Delta_{\pm}$, can be described as follows:
	$$\Delta_+=\{\tau\preccurlyeq \Sigma(\delta)\mid \tau \not\supset \langle e_i\rangle \text{ for } i=d_2+1,\ldots,n+1\},$$
	$$\Delta_-=\{\tau\preccurlyeq \Sigma(\delta)\mid \tau \not\supset \langle e_i \rangle \text{ for } i=1,\ldots,d_1\},$$
	where by $\Sigma(\delta)$ is the natural fan associated to $\delta$.
\end{proposition}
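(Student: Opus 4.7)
The plan is to establish the two assertions in sequence, treating first the coordinate descriptions of $B_\pm$ and then the fan descriptions.

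I would begin by verifying the coordinate descriptions through direct analysis of the limit behaviour of orbits. For a point $p=(p_-,p_0,p_+)\in\C^{n+1}$ and $t\in\C^*$, the coordinates of $t\cdot p$ are $t^{-q_i}p_i$ for $1\le i\le d_1$, $p_i$ for $d_1<i\le d_2$, and $t^{q_i}p_i$ for $d_2<i\le n+1$. As $t\to\infty$, the negative-weight coordinates tend to $0$, the fixed coordinates remain, and each positive-weight coordinate tends to $\infty$ unless the corresponding $p_i$ vanishes. Hence $\lim_{t\to\infty}t\cdot p$ fails to exist in $\C^{n+1}$ if and only if $p_+\neq(0,\ldots,0)$, giving the asserted description of $B_+$; the symmetric analysis as $t\to 0$ characterises $B_-$.

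For the fan description I would appeal to the standard toric dictionary (see \cite{CLS}). Set $\sigma_+:=\langle e_{d_2+1},\ldots,e_{n+1}\rangle$ and $\sigma_-:=\langle e_1,\ldots,e_{d_1}\rangle$. The coordinate descriptions above identify the closed complements $\{p_+=0\}$ and $\{p_-=0\}$ in $X_\delta=\C^{n+1}$ with the torus-invariant orbit closures $V(\sigma_+)$ and $V(\sigma_-)$ respectively, because in the simplicial affine toric variety $\C^{n+1}$ the orbit closure $V(\sigma_\pm)$ is cut out precisely by the vanishing of the coordinates indexed by the rays of $\sigma_\pm$. The complement $X_\Sigma\setminus V(\sigma)$ of a torus-invariant closed subvariety is itself an open toric subvariety whose fan is the subfan of $\Sigma$ consisting of those $\tau\in\Sigma$ for which $\sigma$ is not a face of $\tau$. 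Since $\delta$ is simplicial, the condition $\sigma_+\not\preceq\tau$ is equivalent to the existence of some $i\in\{d_2+1,\ldots,n+1\}$ with $\langle e_i\rangle\not\preceq\tau$, which matches the description of $\Delta_+$ in the statement; the argument for $\Delta_-$ is entirely analogous.

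I expect no substantive obstacle: the whole argument reduces to an explicit limit computation followed by an application of the toric dictionary. The only delicate point is the identification of the closed coordinate loci $\{p_\pm=0\}$ with the torus-invariant orbit closures $V(\sigma_\pm)$; once this translation is in place, the subfan description is immediate, and the compatibility with the GIT viewpoint of Remark~\ref{remark:affinegit} is automatic since both $\Delta_\pm$ are visibly subfans of $\Sigma(\delta)$.
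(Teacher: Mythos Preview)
Your proposal is correct and follows essentially the same approach as the paper: a direct limit computation to obtain the coordinate descriptions of $B_\pm$, followed by the orbit--cone correspondence to identify the complements $\{p_\pm=0\}$ with the orbit closures $V(\sigma_\pm)$ and hence read off the subfans $\Delta_\pm$. Your argument is slightly more explicit in spelling out both steps, but there is no substantive difference in strategy.
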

\begin{proof}
	Let us prove only the case of $B_+$, being the other similar. Considering the $\C^{*}$-action on $\C^{n+1}$ described in Setup \ref{setupcobordism}, the identity $B_+=\{p\in X_{\delta}\mid p_+\neq 0\}=\C^{n+1}\setminus \{(p_-,p_0,0)\}$ follows by definition. By the orbit-cone correspondence (see \cite[Theorem 3.2.6]{CLS}) the set given by all the points of $\C^{n+1}$ of the form $\{(p_-,p_0,0)\}$ is the union of orbits associated to the cones of $\delta$ which do not contain $\langle e_{d_2+1}, \dots ,e_{n+1} \rangle$, therefore the claim. 
 \end{proof}
We then may visualize $B_-,B_+$ as respectively:

\begin{center}
	\includegraphics[scale=0.08]{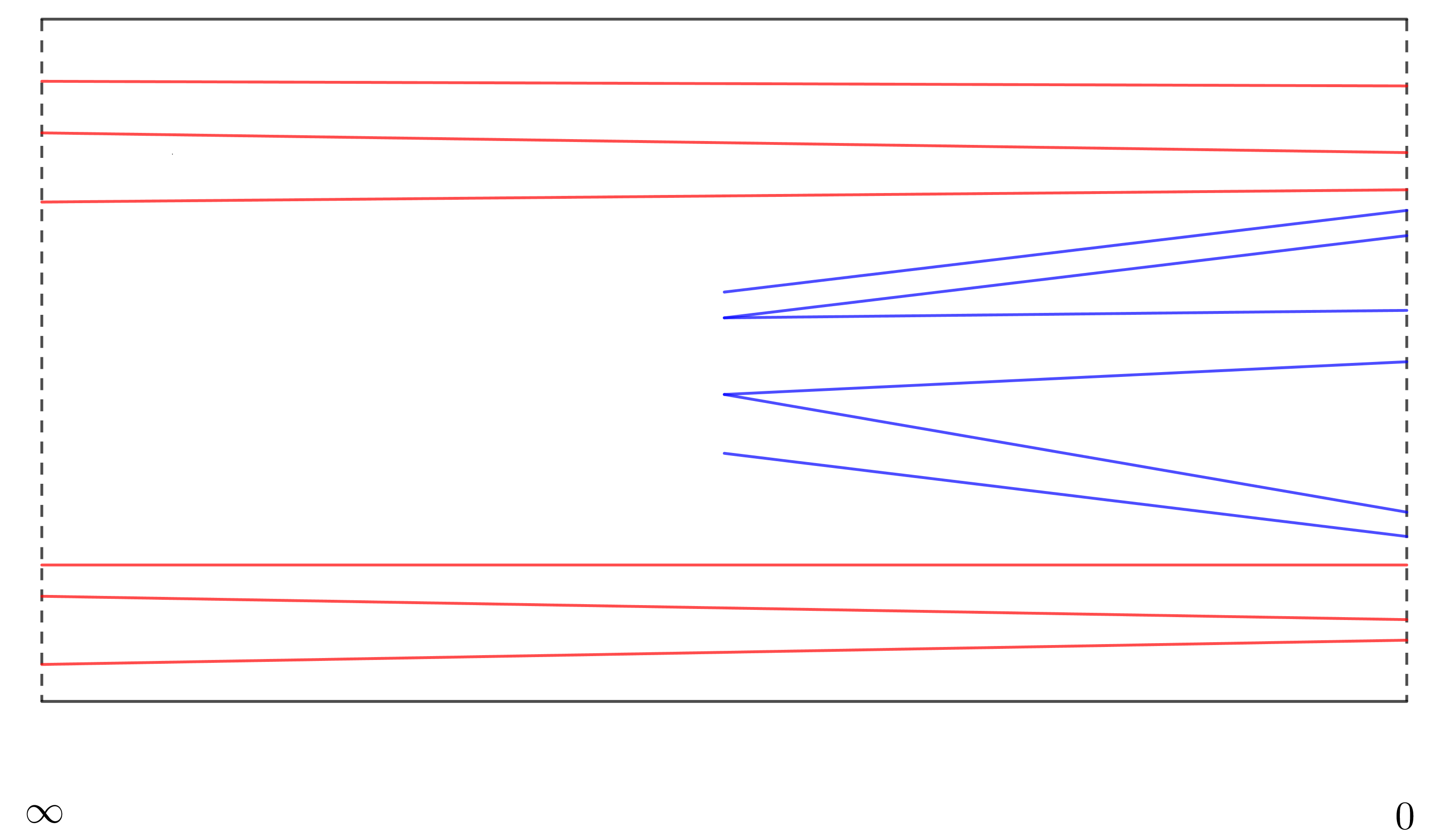}\hspace{0.5cm}\includegraphics[scale=0.08]{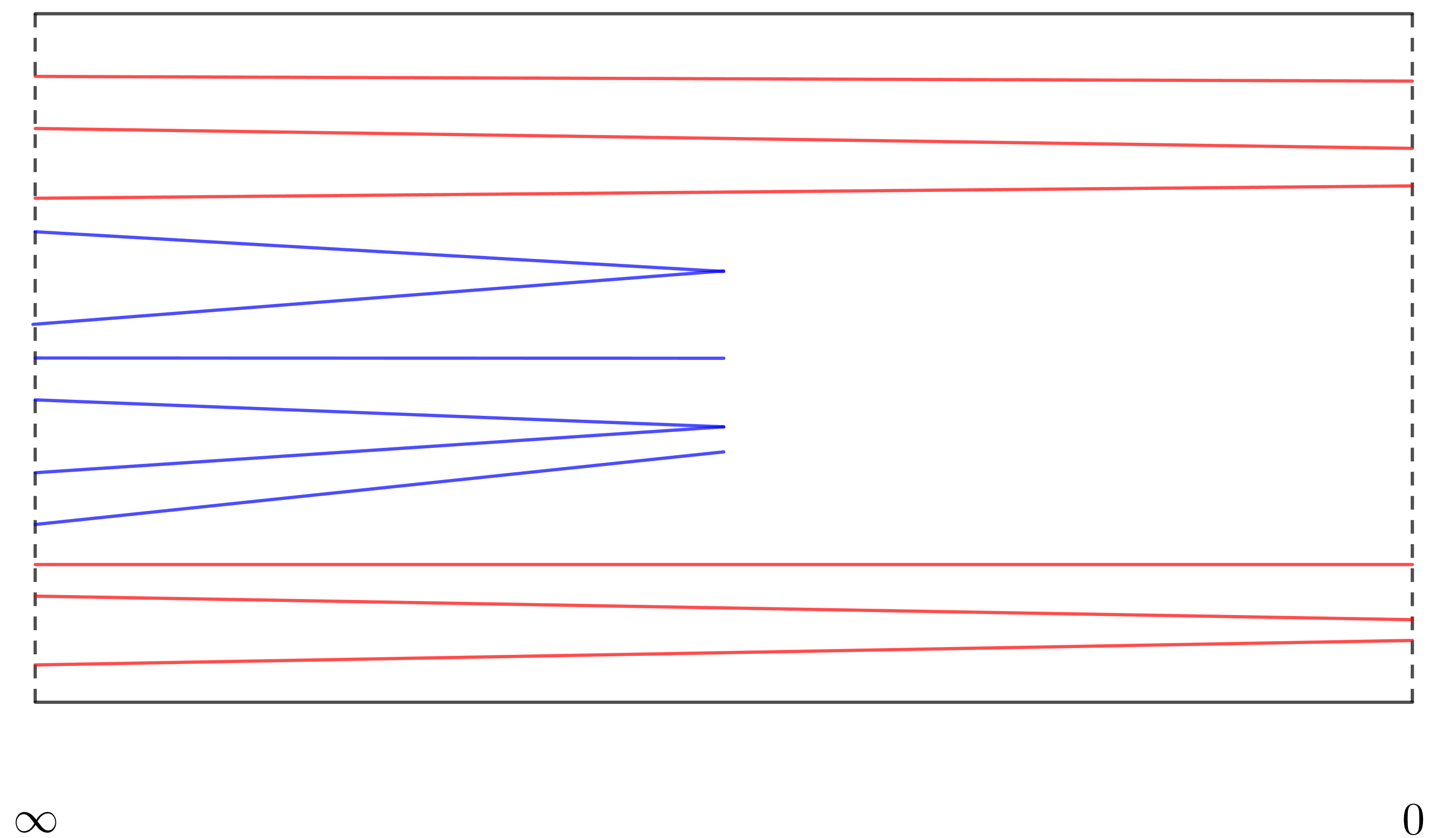}
\end{center}

Indeed we are removing the points whose limit, for $t\in\C^*$ going to $0,\infty$, does not converge.

\begin{remark}
	In the 3-dimensional case the maximal cones of $\Delta_-$ (respectively $\Delta_+$) can be easily  detected by looking at the maximal cones visible from $v$ (respectively $-v$). This is also consistent with the description given in \cite[Example 2]{Wlodarczyk} and it is an easy application of the orbit-cone correspondence.
	
Consider for example the cone $\delta=\langle e_1,e_2,e_3 \rangle$ and the action of $\C^*$ on $\C^3$ given by $v=(-2,-1, 1)$. Then the corresponding cones of maximal dimension are: $$\Delta_+(2)=\{ \langle e_1,e_2\rangle\}, \hspace{0.3cm} \Delta_-(2)=\{ \langle e_1,e_3 \rangle , \langle e_2,e_3\rangle \}.$$
	\begin{center}
		\includegraphics[scale=0.15]{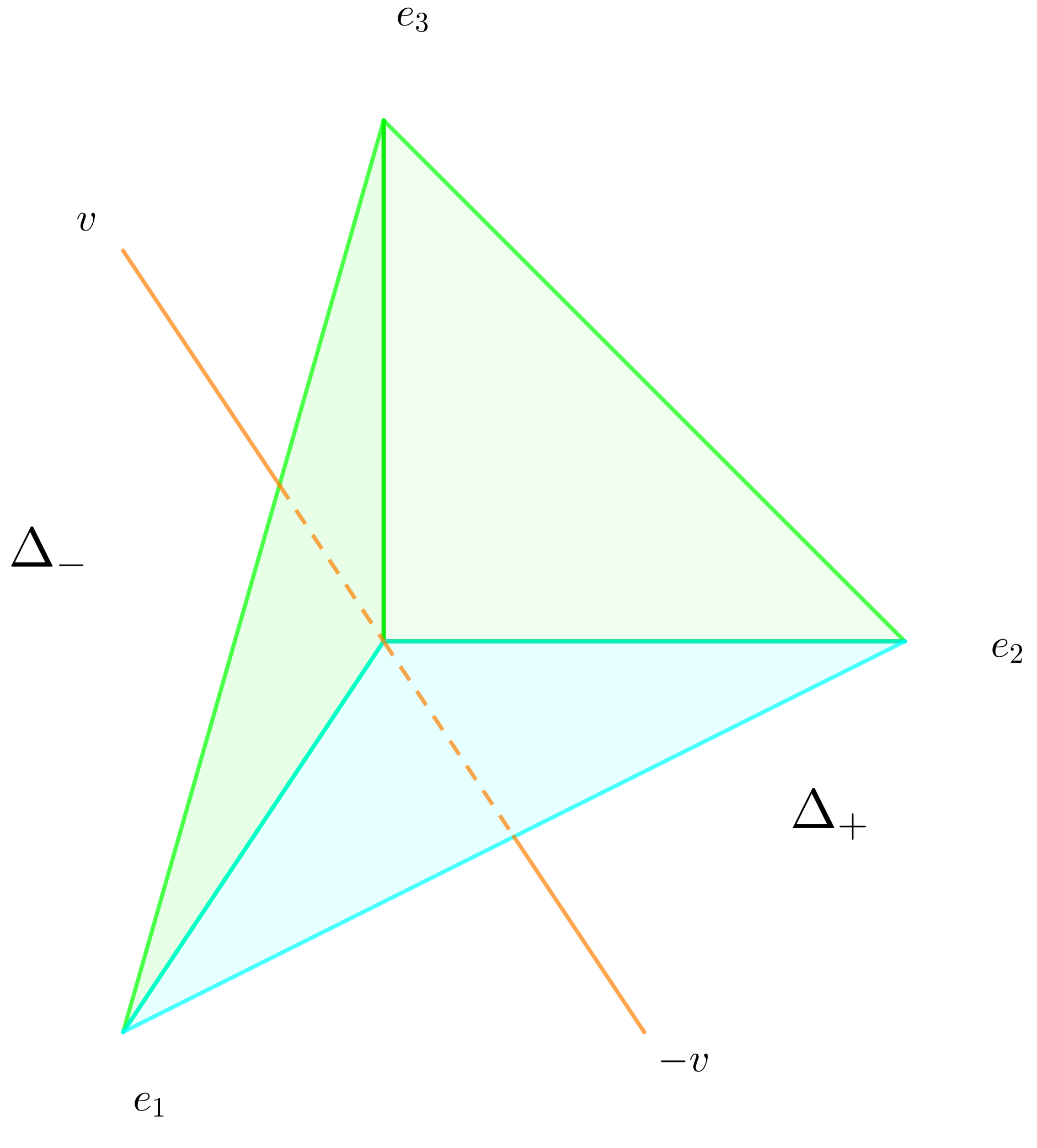}
	\end{center}
\end{remark}

Taking into account Proposition \ref{prop:description_B}, we set $\overline{\Delta_{\pm}}:=\pi(\Delta_{\pm})$, and from now on we denote the associated toric varieties by $X_{\mp}=X_{\overline{\Delta}_{\pm}}$. The reason behind this apparent misleading notation is justified by the fact that $X_-$ and $X_+$, as we will see, are respectively the sink and the source of a $\C^*$-action.

\begin{remark} \label{rem:cobordism}
The morphisms $B_{\mp}\to X_{\pm}$ are geometric quotients; in particular they are $\C^*$-bundles (cf. \cite[Example 2]{Wlodarczyk},\cite[Lemma p. 265]{JW-Toric}). Therefore, we deduce that $\C^{n+1}$ gives a cobordism between the varieties $X_-$ and $X_+$. 
\end{remark}

\begin{lemma} \label{lem:flip}
	There exists a toric flip among $X_-$ and $X_+$.
\end{lemma}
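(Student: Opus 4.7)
The plan is to realize $X_-$ and $X_+$ as two distinct small simplicial refinements of a common non-simplicial affine toric variety $X_0$, a configuration which precisely describes a toric flip in the sense of Reid. First I would verify that $\pi(\delta)$ is strongly convex in $\overline{N}_{\R}$: since $v$ has both strictly negative and strictly positive coordinates (using $d_1\geq 2$, $n+1-d_2\geq 2$, and $q_i>0$), one has $\delta\cap\R v=\{0\}$, so $\pi(\delta)$ defines an affine toric variety $X_0:=X_{\pi(\delta)}$, which by Remark~\ref{remark:affinegit} coincides with the GIT quotient $\C^{n+1}\git\C^*$.

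Next I would analyze the maximal cones. By Proposition~\ref{prop:description_B}, the maximal cones of $\Delta_+$ are $\sigma^+_j:=\langle e_i:i\neq j\rangle$ with $j\in\{d_2+1,\ldots,n+1\}$, and analogously for $\Delta_-$ with $j\in\{1,\ldots,d_1\}$. The unique linear relation among $\pi(e_1),\ldots,\pi(e_{n+1})$ in $\overline{N}$ is
\[
\sum_{i=1}^{d_1}q_i\,\pi(e_i)\;=\;\sum_{k=d_2+1}^{n+1}q_k\,\pi(e_k),
\]
which has nonzero coefficient on each $\pi(e_\ell)$ for $\ell\in\{1,\ldots,d_1\}\cup\{d_2+1,\ldots,n+1\}$. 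It follows that every $\pi(\sigma^\pm_j)$ is simplicial of top dimension $n$ and that $\overline{\Delta_+}$ and $\overline{\Delta_-}$ share the same set of rays $\{\pi(e_i)\}_{i=1}^{n+1}$ and the same support $\pi(\delta)$. The induced morphisms $\varphi_\pm\colon X_\pm\to X_0$ are therefore proper, birational and small (no new rays appear in either refinement), so $X_-\dashrightarrow X_+$ is an isomorphism in codimension one.

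To conclude, I would check that $X_-\dashrightarrow X_+$ is a flip rather than a flop or a biregular isomorphism, by applying the combinatorial wall-crossing criterion (see \cite[\S 14]{ReidToric} and \cite[p.~266]{JW-Toric}). The bipartition $\{1,\ldots,d_1\}\sqcup\{d_2+1,\ldots,n+1\}$ dictated by the signs in the above relation gives exactly the two simplicial subdivisions $\overline{\Delta_\pm}$, and to identify the flip it remains to exhibit a Cartier divisor on $X_0$ whose pullback is $\varphi_-$-ample and $\varphi_+$-anti-ample---for instance, the divisor associated to any $\C^*$-character pairing nontrivially with $v$, via the toric Kleiman criterion applied to the curve class dual to the wall $\R_{\geq 0}\big(\sum_i q_i\pi(e_i)\big)$. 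The main obstacle lies precisely in this ampleness verification: the standard literature treats the equalized case $q_i=1$, but because all $q_i>0$ the sign pattern of the relation is unchanged, so the flip criterion carries over, with the $q_i$'s entering only through local multiplicities along the exceptional loci---which is exactly what distinguishes an Atiyah flip from a non-equalized one.
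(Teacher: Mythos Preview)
Your approach is essentially the paper's: identify $\overline{\Delta_+}$ and $\overline{\Delta_-}$ as two simplicial subdivisions of the single cone $\overline{\delta}=\pi(\delta)$ sharing the same set of rays, and then invoke the standard toric-flip machinery of Reid and Wi\'sniewski. The paper's proof is much terser---it simply records the two decompositions $\overline{\delta}=\bigcup_{i=1}^{d_1}\overline{\delta_i}=\bigcup_{i=d_2+1}^{n+1}\overline{\delta_i}$ and cites \cite[Theorem~3.4]{ReidToric} and \cite[\S3]{JW-Toric}---whereas you spell out the strong convexity of $\pi(\delta)$ and the linear relation governing simpliciality, which is fine and arguably clearer.

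Two small points. First, Setup~\ref{setupcobordism} only gives $d_2<n+1$, hence $n+1-d_2\geq 1$, not $\geq 2$ as you write; fortunately your strong-convexity argument only needs $v$ to have at least one positive and one negative coordinate, so the slip is harmless. Second, your final paragraph attempting to separate ``flip'' from ``flop'' via an explicit relatively ample divisor is both more than the paper does and somewhat vaguely specified (``any $\C^*$-character pairing nontrivially with $v$'' is not quite a divisor on $X_0$). In the Reid--Wi\'sniewski framework used here, the term \emph{toric flip} already refers to exactly this combinatorial wall-crossing between the two simplicial subdivisions determined by the sign pattern of the relation, so no further ampleness check is required; the paper accordingly omits it.
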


\begin{proof}
	As noted in \cite[$\S$5.2, p. 21]{WORS1} and in \cite[p. 265]{JW-Toric}, the fans $\overline{\Delta_{\pm}}$ determine two simplicial subdivisions of $\overline{\delta}$, that is
	$$\overline{\delta}=\bigcup_{i=1}^{d_1} \overline{\delta_i}=\bigcup_{i=d_2+1}^{n+1}\overline{\delta_i},$$
	where by $\overline{\delta_i}$ we mean the image under $\pi$ of the cone $\delta_i=\langle e_1,\ldots, \hat{e}_{i}, \ldots, e_{n+1}\rangle$. It is well know that the map associated with the operation of replacing one subdivision with the other is a flip (see for istance \cite[Theorem 3.4]{ReidToric} or \cite[$\S$3]{JW-Toric}), hence the claim. 
\end{proof}

In the situation described in Set-up \ref{setupcobordism}, since in view of Lemma \ref{lem:flip} there exists a toric flip $X_-\dashrightarrow X_+$, it makes sense to introduce the notion of \emph{toric Atiyah flip} and \emph{toric non-equalized flip} as follows. 

\begin{definition}\label{toricatiyahflip}
	If the non-zero weights of the $\C^*$-action on $\C^{n+1}$ are equal to $\pm 1$, then the birational transformation $X_- \dashrightarrow X_+$ is called \emph{toric Atiyah flip}. Otherwise it will be called \emph{toric non-equalized flip}. 
\end{definition}

The clean distance between this terminology lies  in the fact that, while the former flip is well-known in the literature, the latter one has a deep connection on the property of being the $\C^*$-action non-equalized, as we will see in the next section.

At this point, the Morelli-W\l odarczyk cobordism (see Remark \ref{rem:cobordism}) may be represented by means of the following diagram:

\begin{center}
	\begin{tikzcd}
		B_+ \arrow[r, hook] \arrow[d]                            & \C^{n+1} \arrow[dd]   & B_- \arrow[l, hook'] \arrow[d]        \\
		X_-= B_+/\mathbb{C}^* \arrow[rr, dashed] \arrow[rd] &                & X_+= B_-/\mathbb{C}^* \arrow[ld] \\
		& \C^{n+1} \git \mathbb{C}^* &                                      
	\end{tikzcd}
\end{center}
where the natural toric morphism $\C^{n+1}\longrightarrow \C^{n+1} \git \mathbb{C}^*$ is a good quotient (see Remark \ref{remark:affinegit}). 
Note that the two $\C^*$-bundles $B_{\mp}\to X_{\pm}$ are not locally trivial if the action is non-equalized. Moreover, the $\C^*$-action on $B_{\mp}$ allows to construct two fiber bundles obtained by intuitively adding respectively the zero and the infinity section.

\begin{proposition}	\label{prop:lb_torici}
	Given the $\C^*$-bundles $B_{\mp}\to X_{\pm}$, consider
	$$ E_{\pm}=B_{\pm}\times^{\C^*}\C:=\frac{B_{\pm}\times \C}{\sim}, $$
	where, given $e,e'\in B_{\pm}$, $\lambda,\lambda'\in\C$,
	$$(e,\lambda)\sim(e',\lambda') \iff \exists \hspace{0.1cm} t\in\C^* \text{ s.t. } e'=te \text{ and } \lambda'=t\lambda. $$
	Then $E_{\pm}$ are fiber bundles on $X_{\mp}$ with fibers $\C$, and they inherit a toric structure: indeed the maximal cones of the associated fans can be respectively described as
	\begin{equation*}
		\begin{split}
			\Lambda_+(n+1)&=\{\langle \delta_i,-v \rangle \mid i=d_2+1,\ldots,n+1\}, \\
			\Lambda_-(n+1)&=\{\langle \delta_i,v\rangle\mid i=1,\ldots, d_1\}.
		\end{split}
	\end{equation*}
\end{proposition}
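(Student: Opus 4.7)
My plan is to verify the two assertions of Proposition \ref{prop:lb_torici} in sequence: first that $E_\pm\to X_\mp$ is a fibre bundle with fibre $\C$, and second that $E_\pm$ is a toric variety whose fan has the stated maximal cones. The first assertion is a direct application of the associated bundle construction to the principal $\C^*$-bundle $B_\pm\to X_\mp$ (cf.~Remark \ref{rem:cobordism}): with model fibre $\C$ endowed with the weight-one linear $\C^*$-action, the quotient $E_\pm=B_\pm\times^{\C^*}\C$ is a fibre bundle over $X_\mp$ with fibre $\C$ by standard general theory.

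For the toric description, I would realise $E_\pm$ explicitly as a toric geometric quotient. Since $B_\pm=X_{\Delta_\pm}$ is toric in the lattice $N$, the product $B_\pm\times\C$ is toric in $N\oplus\Z e_0$ with fan $\Delta_\pm\times\R_{\geq 0}e_0$, and the diagonal $\C^*$-action defining $E_\pm$ corresponds to the $1$-parameter subgroup $(v,1)$. By the standard toric-GIT description (see \cite[Chapter 14]{CLS} and the parallel construction in \cite[Example 2]{Wlodarczyk}), the geometric quotient is the toric variety whose lattice is $(N\oplus\Z e_0)/\Z(v,1)\cong N$ and whose fan is the image of $\Delta_\pm\times\R_{\geq 0}e_0$ under this projection; a direct calculation shows that each maximal cone $\delta_i\oplus\R_{\geq 0}e_0$ projects onto $\langle\delta_i,\mp v\rangle$ for $i$ in the stated range, yielding exactly the fan $\Lambda_\pm$.

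To confirm that this toric model really coincides with the associated bundle $E_\pm$ over $X_\mp$, three properties can be read off from the orbit--cone correspondence: $\Delta_\pm$ is a subfan of $\Lambda_\pm$, giving the open inclusion $B_\pm\hookrightarrow X_{\Lambda_\pm}$; the complement is the torus-invariant divisor $V(\langle\mp v\rangle)$, whose star in $\Lambda_\pm$, taken modulo $\Z v$, equals $\overline{\Delta_\pm}$, so that $V(\langle\mp v\rangle)\cong X_\mp$; and the lattice projection $\pi\colon N\to\overline{N}$ induces the toric morphism $X_{\Lambda_\pm}\to X_\mp$ whose fibres are one-dimensional, realising $V(\langle\mp v\rangle)$ as the zero section.

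The main obstacle I anticipate is the careful tracking of sign conventions in the identification $(N\oplus\Z e_0)/\Z(v,1)\cong N$, which is well-defined only up to the involution reversing the transverse direction; the sign must be pinned down compatibly with the specific $\C^*$-action used to form $E_\pm$ so that the new ray is generated by $\mp v$ with the correct orientation (this is what accounts for the opposite signs appearing in $\Lambda_+$ versus $\Lambda_-$). Once this is aligned, the remaining verification reduces to routine applications of the orbit--cone correspondence.
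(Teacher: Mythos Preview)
Your approach is correct and is genuinely different from the paper's. The paper's proof is a single sentence: it takes the fan $\Lambda_+$ as given by the formula in the statement, observes that it \emph{weakly splits} by $\langle -v\rangle$ and $\Delta_+$ in the sense of \cite[Exercise~3.3.7]{CLS}, and concludes directly that $X_{\Lambda_+}$ is a fibre bundle with fibre $\C$. In other words, the paper works in the direction ``fan $\Rightarrow$ bundle structure'' via a ready-made toric criterion, and leaves the identification $E_\pm\cong X_{\Lambda_\pm}$ implicit.

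You go the other way: starting from the associated-bundle description $E_\pm=B_\pm\times^{\C^*}\C$, you realise it as a toric geometric quotient of $B_\pm\times\C$ by the $1$-parameter subgroup $(v,1)$ in $N\oplus\Z$, and then compute the image fan in $(N\oplus\Z)/\Z(v,1)\cong N$ explicitly to recover $\Lambda_\pm$. This buys you an honest derivation of the fan (rather than a post-hoc check) and makes the geometry of the zero section transparent through the orbit--cone correspondence, at the cost of a longer argument. The sign issue you flag is real: with the equivalence relation literally as written (weight $+1$ on the $\C$-factor for both signs), the canonical identification $(n,k)\mapsto n-kv$ sends the new ray to $-v$ in both cases, whereas the statement places $+v$ in $\Lambda_-$; the intended convention, confirmed by the role of $X_+$ as the zero section of $E_-$ in Proposition~\ref{toricbordism}, is that $E_-$ compactifies in the $+v$ direction, so one should take weight $-1$ on the $\C$-factor (equivalently, quotient by $(v,-1)$) when constructing $E_-$. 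Your caveat already accounts for this.
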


\begin{proof}
	Let us consider the case of $\Lambda_+$, being the other similar. The fan $\Lambda_+$ weakly splits (in the sense of \cite[Exercise 3.3.7]{CLS}) by $\langle -v \rangle$ and $\Delta_+$, hence we conclude it is a fiber bundle with fibers isomorphic to $\C$.
\end{proof}

In analogy with our picture-like description, we can visualize $E_+,E_-$ as:

\begin{center}
	\includegraphics[scale=0.08]{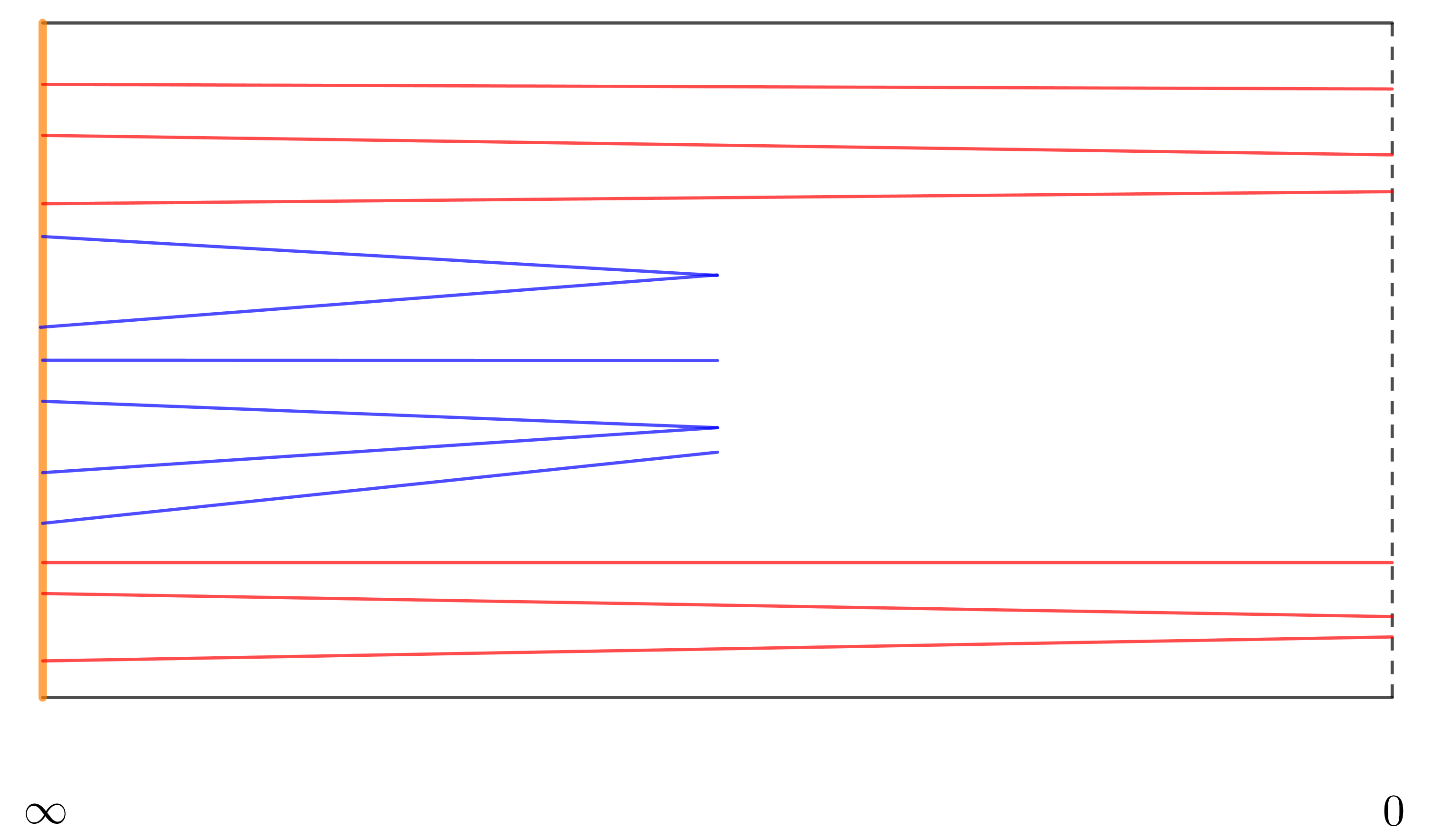}\hspace{0.5cm}\includegraphics[scale=0.08]{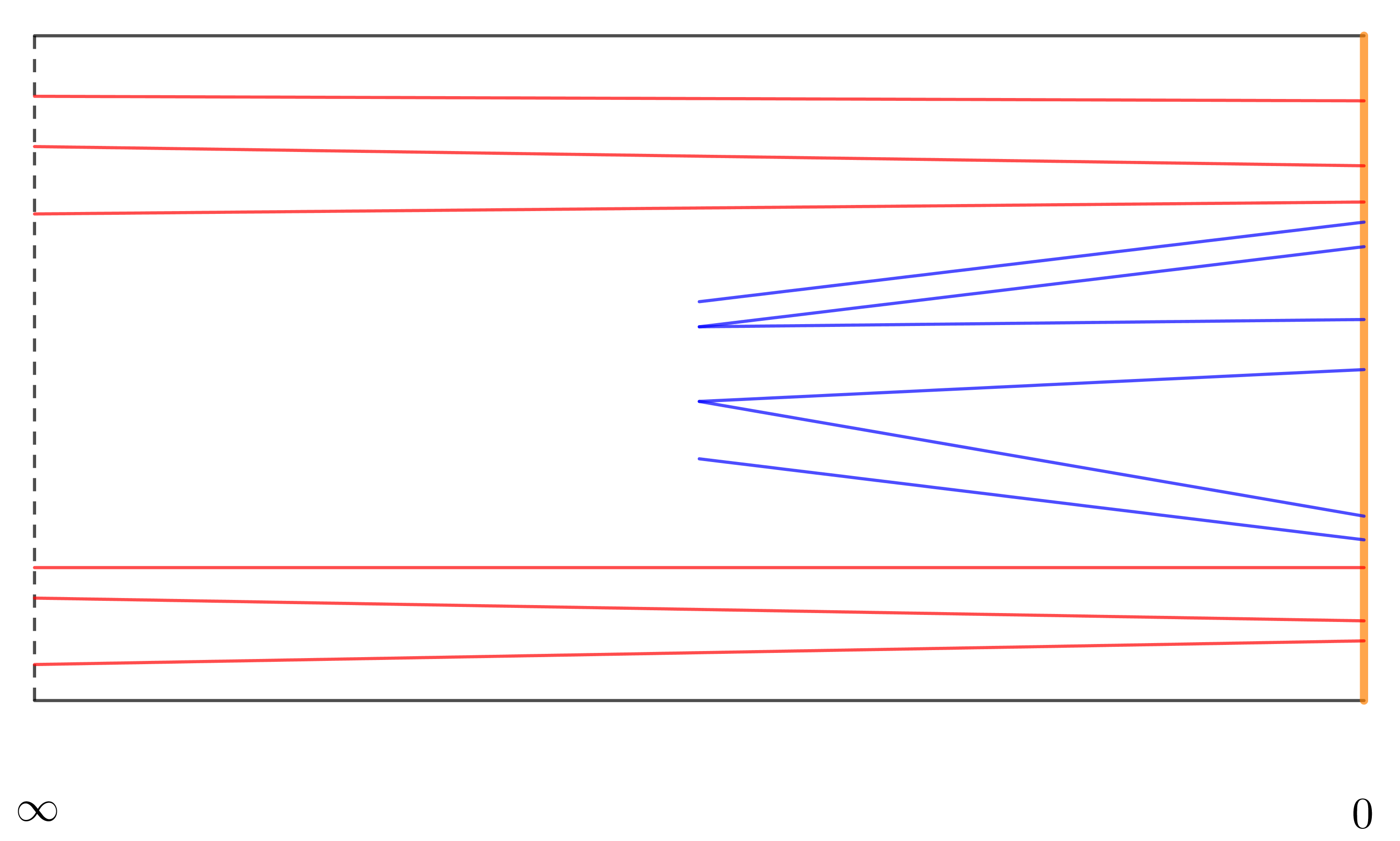}
\end{center}
where the left and right side represent the base spaces $X_-$ and $X_+$, while the horizontal lines correspond to the fibers of the bundles. 

\begin{proposition}\label{toricbordism}
	Let us keep the notations and assumptions of Set-up \ref{setupcobordism}, and let $\Lambda_{\pm}$ be the fans associated to the toric varieties $E_{\pm}$ of Proposition \ref{prop:lb_torici}. Consider a new fan 
	$$\tilde{\Sigma}=\Lambda_+\cup \Sigma(\delta) \cup \Lambda_- $$
	in $N_{\R}$. Then the corresponding toric variety $X_{\tilde{\Sigma}}$ admits a $\C^*$-action associated to the $1$-parameter subgroup $v$. Moreover, the fixed locus of this action is given by $X_-,X_+$ that are respectively the sink and the source, and by an inner component isomorphic to $\C^{d_2-d_1}$.
\end{proposition}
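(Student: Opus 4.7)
The plan is to proceed in three stages: verify that $\tilde\Sigma$ is a genuine fan in $N_\R$, extract the $\C^*$-action directly from the lattice element $v$, and then read off the fixed locus from the combinatorics of $\tilde\Sigma$.

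For the first stage, $\Sigma(\delta)$, $\Lambda_+$, $\Lambda_-$ are already fans by Proposition \ref{prop:lb_torici}, so only cross-compatibility among the three pieces needs verification. A short coordinate-wise inspection, exploiting that the negative weights of $v$ appear only in the first $d_1$ slots and the positive ones only in the last $n+1-d_2$ slots, gives $\delta\cap\langle \delta_i,\pm v\rangle=\delta_i$ (for $i>d_2$, resp.\ $i\le d_1$) and $\langle \delta_i,-v\rangle\cap\langle \delta_j,v\rangle=\delta_i\cap\delta_j$; every pairwise intersection is therefore a common face, and $\tilde\Sigma$ is a fan.

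Next, the $\C^*$-action is the one-parameter subgroup $\lambda_v\colon\C^*\to T\subset\Aut(X_{\tilde\Sigma})$ attached to $v\in N$. By the standard toric criterion an orbit $O_\sigma$ is pointwise $\lambda_v$-fixed exactly when $v$ lies in the $\R$-span of $\sigma$, and enumerating the minimal cones of $\tilde\Sigma$ with this property yields the two rays $\langle v\rangle$, $\langle -v\rangle$, together with $\tau:=\langle e_1,\dots,e_{d_1},e_{d_2+1},\dots,e_{n+1}\rangle\in\Sigma(\delta)$. A star computation identifies $V(\langle v\rangle)$ with the toric variety of $\overline{\Delta_-}$, hence with $X_+$, and dually $V(\langle -v\rangle)=X_-$. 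Since no cone of $\Lambda_+\cup\Lambda_-$ contains $\tau$ (nonnegativity of the coefficients prevents any $e_k$ with $k\le d_1$ or $k>d_2$ from lying in a cone of the form $\langle \delta_k,\pm v\rangle$), the star of $\tau$ in $\tilde\Sigma$ coincides with its star in $\Sigma(\delta)$, yielding $V(\tau)\cong\C^{d_2-d_1}$.

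Finally, to assign the roles of sink, source and inner component I would invoke the toric description of limits of one-parameter subgroups: for a generic $x$ in the maximal torus, $\lim_{t\to 0}\lambda_v(t)\cdot x$ lies in the orbit of the smallest cone of $\tilde\Sigma$ containing $v$, which is $\langle v\rangle$; this forces $Y_+=V(\langle v\rangle)=X_+$, and dually $Y_-=X_-$, so that $V(\tau)$ is the unique inner fixed component. The principal technical difficulty is the fan-compatibility step together with the verification that $\tau$'s star is unaffected by the cones added in $\Lambda_\pm$, both of which reduce to careful sign-tracking in the coordinates of $\pm v$.
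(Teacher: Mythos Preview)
Your argument is correct and considerably more detailed than the paper's own proof. The paper proceeds geometrically and very briefly: it cites \cite[Proposition~5.7]{WORS1} to extend the $\C^*$-action to $X_{\tilde\Sigma}$, and then identifies the fixed locus by inspecting the three patches separately, noting that $X_-$ and $X_+$ are precisely the zero sections of the fiber bundles $E_+\to X_-$ and $E_-\to X_+$ (hence sink and source), while the inner component is the zero-weight locus $\C^{d_2-d_1}\subset X_\delta$. You instead work entirely in the combinatorics: you verify explicitly that $\tilde\Sigma$ is a fan (a point the paper leaves implicit), invoke the orbit--cone criterion ``$O_\sigma$ is $\lambda_v$-fixed iff $v\in\operatorname{span}_\R\sigma$'' to list the minimal fixed cones $\langle v\rangle,\langle -v\rangle,\tau$, and recover the three components via star computations and the toric limit description. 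Your route is more self-contained and makes the fan condition and the disjointness of the components transparent; the paper's route is shorter but relies on the bundle description set up in Proposition~\ref{prop:lb_torici} and on an external reference.
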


\begin{proof}
	In analogy with \cite[Proposition 5.7]{WORS1}, the action of $\C^*$ associated to $v$ can be extended to the toric variety $X_{\tilde{\Sigma}}$. In order to study the fixed point locus of the action, we consider separately the three toric varieties used in the construction: on one hand $X_-$ and $X_+$ are precisely the zero sections of the fiber bundles $E_{+}$ and $E_-$, thus they will correspond to the sink and the source of the action;  on the other, the inner component on which $\C^*$ acts with zero weights is associated to the affine space of dimension $d_2-d_1$.	
\end{proof}

We thus have constructed a new toric variety by glueing together the following three patches corresponding, respectively, to $E_+$, $X_\delta$ and $E_-$:

\begin{center}
	\includegraphics[scale=0.062]{X-.png}\hspace{0.2cm} \includegraphics[scale=0.062]{cn.png} \hspace{0.2cm} \includegraphics[scale=0.062]{X+.png}
\end{center}

Using all the constructions explained above, we are ready to introduce the notion of toric bordism. 
\begin{definition}\label{definition:toricbordism}
	The equivariant embeddings $X_-\hookrightarrow X_{\tilde{\Sigma}}\hookleftarrow X_+$ are called \emph{toric bordism} associated to the toric flip $X_-\dashrightarrow X_+$.
\end{definition}

\begin{remark} By starting with a $\C^*$-action as in Set-up \ref{setupcobordism} which gives a cobordism among two varieties $X_-$ and $X_+$ (see Remark \ref{rem:cobordism}), one may construct a toric bordism, namely a toric variety $X_{\tilde{\Sigma}}$ admitting a $\C^*$-action having $X_{\pm}$ as extremal fixed components and such that $X_-\dashrightarrow X_+$ is a toric flip either of Atiyah or non-equalized type. If the toric flip $X_-\dashrightarrow X_+$ is of Atiyah type, note that we obtain \cite[Definition 5.8]{WORS1}.
\end{remark}

\subsection{Weighted blow-ups along fixed components} \label{sec:weighted} In this subsection we introduce the notion of weighted blow-up of a $\C^*$-variety $X$ along an extremal fixed component $Y$. 
Our construction is a generalization of \cite[$\S$II.7, p. 163]{Ha}. We refer to \cite{ATW} for a detailed anthology of the use of the weighted blow-up in the literature. We will consider it in the case of $\C^*$-actions; the following construction is probably well known to the experts, but we present it for lack of references. 

Given a smooth projective variety $X$ of arbitrary dimension $n$, assume that we have a $\C^*$-action $\alpha
\colon \C^*\times X\to X$ which is non-equalized at an extremal fixed point component $Y$. As observed in section \ref{sec:prelim}, the subvariety $Y$ is also smooth. 

Set $d:=\dim{Y}$ and denote by $q_{d+1},\dots,q_{n}$ the non-zero weights of the $\C^*$-action on $T_{Y|X}$, with $q_{d+1}\leq \dots \leq q_{n}$. Without loss of generality, we may assume that $Y$ is the source, so that these weights are all positive.  
Let $\cI_{Y}$ be the ideal sheaf of $Y$ on $X$. Starting from $\cI_{Y}$ we will construct a graded finitely generated $\cO_{X}$-algebra. Take $\cU\subset X$ an open subset. Locally analytically in a neighborhood of a general point $z$ of $\cU \cap Y$ we may consider coordinates $y_1,\dots, y_d, x_{d+1},\dots,x_n$ such that $\cU \cap Y$ has equations $x_{d+1}=\dots=x_n=0$, and $y_1,\dots, y_d$ are local coordinates of $\cU\cap Y$ around $z$. Notice that $\C^*$ acts with weights zero on $y_1,\dots, y_d$, and with respectively weights $q_i$, $i=d+1,\dots,n$ on  $x_{d+1},\dots,x_n$. Locally analitically around $z$, a regular function $f$ can be written in a compact way: $$f=f(y_1,\dots, y_d,x_{d+1}, \dots, x_{n})=\sum b_{IJ} x^I y^J,$$
where $x^I=\prod_{i=d+1,\dots,n} x_i^{m_i}$, $y^J=\prod_{j=1,\dots,d} y_j^{s_j}$, and each monomial $x^I y^J$ is set to have degree $\sum_{i=d+1}^{n} q_{i} m_i$. Then we define $$\deg_{\alpha}{f}=\min\{{\deg{x^I y^J}}\}$$ and $$\cS_{\alpha}=\bigoplus_{m\in \ZZ_{\geq 0}} \cI_{Y}^{{\alpha,m}},$$ where $\cI_{Y}^{\alpha,m}(U)=\{f\in \cI_{Y}(U)| \deg_{\alpha}{f}=m\}$, and $\cI_{Y}^{\alpha,0}=\cO_X$. Clearly, $\cS_{\alpha}$ is a graded finitely generated $\cO_{X}$-algebra, so that we may consider the projective variety $X^{\flat}=\Proj{\cS_{\alpha}}$. The natural inclusion $\cO_X\subset \cS_{\alpha}$ provides a birational map $X^{\flat}\to X$. Summarizing the construction we have already introduced, we formulate the notion of a weighted blow-up along an extremal component in the following way. 
\begin{definition} \label{def:wblowup} Let $X$ be a smooth projective variety with a $\C^*$-action $\alpha$ non-equalized at an extremal component $Y\in \cY$. The weighted blow-up $X^\flat$ of $X$ along $Y$ with respect to $\alpha$ is the projective variety constructed above as $X^\flat=\Proj{\cS_{\alpha}}$, together with the birational map $X^{\flat}\to X$. 
\end{definition} 
\begin{remark} \label{rem:wb_extremal} Let us denote by $Y^{\flat}$ the exceptional divisor of the weighted blow-up along $Y$. Running the same arguments of \cite[Theorem II.8.24]{Ha} we obtain that $$Y^{\flat}=\PP_{\alpha}(\cN_{Y|X}^{\vee}):=\frac{\cN_{Y|X}\setminus s_0(Y)}{\sim},$$ where  $s_0(Y)$ is the zero section and $\sim$ denotes the quotient by the action $\alpha$. Using that $\cN^{\pm}(Y)\simeq X^{\pm}(Y)$ (see Theorem \ref{thm:BB_decomposition} (2)) we deduce that if $Y$ is the sink (respectively, the source) then $Y^{\flat}$ parametrizes all the orbits whose sink (respectively, source) lies in $Y$; namely the natural projection $Y^{\flat}\to Y$ is a fiber bundle whose fibers are isomorphic to the weighted projective space $\mathbb{P}(q_{d+1},\dots,q_n)$. 
\end{remark}

\begin{remark} \label{rem:wblow_vs_bl} Notice that in the case in which the action is equalized at the extremal component $Y$, the weighted blow-up of $X$ along $Y$ with respect to the $\C^*$-action coincides with the classical blow-up. 
\end{remark}

We conclude this part by studying the parallelism between the above construction and the well-known toric description of the weighted blow-up. The following Remark follows from the construction given in \cite[Chapter 3.3, pp. 131-132]{CLS}. 

\begin{remark} \label{rem:toric_wb}
Take a lattice of 1-parameter subgroups $N\simeq \Z^n$, and consider a cone $\sigma=\langle e_1,\ldots, e_n\rangle$, where $e_1,\ldots,e_n$ is the canonical basis of $\R^n$, such that the corresponding affine toric variety is $X_{\sigma}=\C^n$. Consider moreover a face $\tau=\langle e_{d+1},\ldots,e_n\rangle$, with $2\leq d < n$, so that by the orbit-cone correspondence one has $\overline{\cO(\tau)}=\{0\}\times \C^d$; take a vector $\omega=(0^d,q_{d+1},\ldots,q_n) \in N$ where the exponent denotes the occurrence of the zero weight and $0<q_{d+1}\leq \dots \leq q_n$. The weighted blow-up of $X_{\sigma}$ along $\overline{\cO(\tau)}$ with weight vector $\omega$ is defined as the toric variety associated to the fan whose maximal dimensional cones are: 
	$$\Sigma^*(\tau)(n)=\{ \langle e_1,\ldots,e_d, \omega, e_{d+1},\dots, \hat{e}_i,\dots, e_n\rangle, \text{\ such that} \ i\in \{d+1,\dots,n\}\},$$
	that is the product fan of the one associated to the weighted blow-up of $\C^{n-d}$ at the origin and of the fan of $\C^d$. It can be proven that the toric morphism $\phi: X_{\Sigma^*(\tau)}\to X_{\sigma}$ is a birational map such that, for any $p\in \overline{\cO(\tau)}$, the preimage is the weighted projective space $\P(q_{d+1},\ldots,q_n)$. Moreover, it can be shown that the map $\beta\colon X^{\flat}\to X$ of Definition \ref{def:wblowup} coincides locally analytically with the toric morphism $\phi: X_{\Sigma^*(\tau)}\to X_{\sigma}$. More concretely, for every point $y\in Y$ there exists an analytic $\C^*$-invariant neighborhood $\cU$ of $y$ in $X$ biholomorphic to $\C^n\simeq X_{\sigma}$, such that $\cU\cap Y=\overline{\cO(\tau)}$, and $\beta^{-1}(\cU)\simeq X_{\Sigma^*(\tau)}$, so that $\beta$ coincides with $\phi$ up to these isomorphisms.  
\end{remark}


\section{Main results}\label{sec:main}

In this section we aim to investigate the birational map described in Lemma \ref{lem:birational_map} arising between exceptional divisors of weighted blow-ups along the sink and source of a variety endowed with a $\C^*$-action. This study has been started in \cite{WORS1} in the case of equalized $\C^*$-actions of small bandwidth. For bandwidth two varieties admitting equalized $\C^*$-actions, it turns out that such a birational map is related to Atiyah flips (see \cite[$\S$7]{WORS1}), while for equalized actions of bandwidth three there is a connection with special Cremona transformations (see \cite[$\S$8]{WORS1}). Here we start to extend the previous investigation to non-equalized actions, dealing with the case of criticality two. Our arguments will involve the notions of toric Atiyah and non-equalized flips (cf. Definition \ref{toricatiyahflip}), and the constructions explained in section \ref{sec:birgeo}, as weighted blow-ups, toric bordism and cobordism. We will keep the notations and assumptions stated as follows. 

\begin{setup} \label{setup_main} Let $X$ be a smooth projective variety with a faithful $\C^*$-action of criticality $r\geq 2$. Let $\beta\colon X^{\flat}\to X$ be the weighted blow-up along the sink $Y_{-}$ and the source $Y_{+}$ with respect to this action. Denote by $Y_{-}^{\flat}$ and $Y_{+}^{\flat}$ the corresponding exceptional divisors, and by $\psi:Y_{-}^{\flat}\dashrightarrow Y_{+}^{\flat}$ the birational map defined in Lemma \ref{lem:birational_map}. We also denote by $Z^\flat_{\pm}$ the strict transform in $X^\flat$ of the varieties $Z_{\pm}$ defined in section \ref{sec:birgeo} (see (\ref{equation_Z})). 
\end{setup}
\begin{remark} Notice that the above assumption on the criticality $r\geq 2$ is not restrictive: indeed, when $r=1$ the birational map defined in Lemma \ref{lem:birational_map} turns out to be an isomorphism, so that for our purpose the first interesting case is $r=2$. 
\end{remark}
\begin{remark} \label{rem:exc_locus} In the situation of Setup \ref{setup_main}, assume that the $\C^*$-action on $X^{\flat}$ is a bordism. Then the birational map $\psi:Y_{-}^{\flat}\dashrightarrow Y_{+}^{\flat}$ is an isomorphism in codimension one. Indeed, by construction, $Z^\flat_{-}$ is the exceptional locus of $\psi$, and by \cite[Corollary 3.7]{WORS1} one has $\codim{(Z^\flat_{-},Y_{-}^\flat)}\geq 2$.
\end{remark}

The following result generalizes \cite[Lemma 3.10]{WORS1} to the case of non-equalized $\C^*$-actions, and it will be the first step towards the study of toric flips associated to $\C^*$-actions.

\begin{lemma} \label{lem:descend_action} In the situation of Setup \ref{setup_main} one has that the $\C^*$-action on $X$ extends to a B-type action on $X^{\flat}$, having $Y_-^{\flat}$, $Y_+^{\flat}$ as sink and source, respectively. 
\end{lemma}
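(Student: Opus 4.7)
The plan is to establish three assertions in sequence: (a) the $\C^*$-action on $X$ lifts to $X^\flat$; (b) the exceptional divisors $Y_-^\flat$ and $Y_+^\flat$ are contained in the fixed locus of the lifted action; (c) these divisors are respectively the sink and the source of the lifted action, so in particular the extended action is of B-type.

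For (a) I would argue directly from the construction $X^\flat=\Proj(\cS_\alpha)$ of Definition \ref{def:wblowup}. Since the grading on $\cS_\alpha=\bigoplus_{m}\cI_{Y_\pm}^{\alpha,m}$ is defined exactly in terms of the weights of the $\C^*$-action on $\cN_{Y_\pm|X}$, each graded piece is $\C^*$-invariant, so $\cS_\alpha$ is a $\C^*$-equivariant sheaf of $\cO_X$-algebras and the action lifts functorially to the relative Proj. Alternatively, using Remark \ref{rem:toric_wb}, around any point of $Y_\pm$ there is a $\C^*$-invariant analytic chart $\cU\simeq\C^n$ in which $\beta$ becomes the toric weighted blow-up $X_{\Sigma^*(\tau)}\to X_\sigma$, and the $\C^*$-action corresponds to a one-parameter subgroup of the big torus, which automatically extends.

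For (b) and (c), I would work in the local toric chart of Remark \ref{rem:toric_wb}. The new ray added in the blow-up is generated by $\omega=(0^d,q_{d+1},\dots,q_n)$, which coincides with the one-parameter subgroup realizing the local $\C^*$-action (up to sign at the sink). Because $\omega$ lies in every maximal cone of $\Sigma^*(\tau)$, the closure of the torus orbit associated to the ray $\langle\omega\rangle$ is pointwise fixed by $v=\omega$; under the identification of Remark \ref{rem:wb_extremal}, this closure is the local model of $Y_-^\flat$ (and analogously for $Y_+^\flat$), giving (b). For (c), note that $\beta$ is a proper morphism and an isomorphism away from $Y_-^\flat\cup Y_+^\flat$; hence the orbit of a generic point of $X^\flat$ projects under $\beta$ to an orbit whose $t\to\infty$ limit lies in $Y_-$, and its lift must lie in $\beta^{-1}(Y_-)=Y_-^\flat$. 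Connectedness of $Y_-^\flat$ as a weighted projective bundle over the connected $Y_-$ then forces $Y_-^\flat$ to be the entire sink, and similarly on the source side.

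The step I expect to require the most care is (b): showing that the whole exceptional divisor, not merely some proper closed subset, sits in the fixed locus. This is the essential new feature of the weighted blow-up in the non-equalized setting, and the toric chart of Remark \ref{rem:toric_wb} is indispensable here because it reduces the verification to the tautological observation that the ray generated by $\omega$ is common to every maximal cone of $\Sigma^*(\tau)$, so that acting by $v=\omega$ fixes every orbit in the star of this ray.
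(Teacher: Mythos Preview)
Your argument is correct, and the overall structure matches the paper's, but the mechanisms you invoke for (b) differ from the paper's. The paper does not pass through the local toric chart of Remark~\ref{rem:toric_wb} at all; instead it appeals directly to Remark~\ref{rem:wb_extremal}, which identifies $Y^\flat_\pm$ with $(\cN_{Y_\pm|X}\setminus s_0(Y_\pm))/\!\sim$, the quotient of the normal bundle by the very $\C^*$-action $\alpha$ in question. From this description it is immediate that the induced action is trivial on each fiber of $Y^\flat_\pm\to Y_\pm$, so the whole exceptional divisor is fixed pointwise in one stroke. Your toric-chart verification reaches the same conclusion, but the paper's route is shorter and more conceptual: it avoids the need to check anything cone by cone, and it makes transparent \emph{why} the weighted blow-up (as opposed to the ordinary one) is the correct construction in the non-equalized setting. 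Conversely, your argument for (c) via properness of $\beta$ and connectedness of $Y^\flat_\pm$ is more explicit than the paper's, which essentially just asserts the conclusion once (b) is in hand.
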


\begin{proof} Since $Y_{\pm}$ are fixed point components, we have natural induced $\C^*$-actions on the corresponding tangent bundles $T_{Y_{\pm}|X}$, thus on $Y_{\pm}^{\flat}$. Hence the $\C^*$-action on $X$ extends to an action on $X^{\flat}$. Moreover, using Remark \ref{rem:wb_extremal} we observe that the $\C^*$-action is trivial on the fibers of the weighted projective bundles $Y_{\pm}^{\flat}\to Y$, therefore we conclude that $Y_{-}^{\flat}$ and $Y_{+}^{\flat}$ are respectively the sink and the source of the $\C^*$-action on $X^{\flat}$. 
\end{proof}

\begin{theorem} \label{thm:main} Let $X$ be a smooth projective variety with a $\C^*$-action of criticality $r=2$ as in Setup \ref{setup_main}. Assume that $X^{\flat}$ is a bordism. Then $\psi$ is locally a toric Atiyah flip if and only if the $\C^*$-action on $X^\flat$ is equalized at every inner component. 
\end{theorem}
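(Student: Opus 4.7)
The plan is to reduce the global statement to a local analysis at each inner fixed component $Y$ of $X^\flat$ and to identify the local behaviour of $\psi$ with the toric flip of Lemma \ref{lem:flip}. By Lemma \ref{lem:descend_action} the action on $X^\flat$ is of B-type with $Y_\pm^\flat$ as sink and source, and since $\beta\colon X^\flat\to X$ is an isomorphism outside $Y_-\cup Y_+$, the inner fixed components of $X^\flat$ coincide with those of $X$ and $X^\flat$ is smooth along them. The criticality assumption $r=2$ then forces every inner $Y$ to carry the single intermediate weight $a_1$, so every non-fixed orbit meeting a neighborhood of $Y$ has sink in $Y_-^\flat$ and source in $Y_+^\flat$.

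First I would fix such a $Y$, pick $y\in Y$, and apply the equivariant slice theorem to produce a $\C^*$-equivariant analytic neighborhood $\mathcal U\subset X^\flat$ of $y$ biholomorphic to $(T_yX^\flat,0)$ with linearized diagonal action. The bordism hypothesis gives $\nu^\pm(Y)\geq 2$, so the non-zero weights split into two nonempty groups $\{-q_1,\dots,-q_{d_1}\}$ and $\{q_{d_2+1},\dots,q_{n+1}\}$ coinciding with the weights on $\cN^+(Y)$ and $\cN^-(Y)$, while the zero block corresponds to $T_yY$. This is precisely the linear action of Setup \ref{setupcobordism}, and hence the local picture of $X^\flat$ along $Y$ is an open piece of the toric bordism $X_{\widetilde\Sigma}$ of Proposition \ref{toricbordism}.

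Next I would transport $\psi$ to this chart. Using Theorem \ref{thm:BB_decomposition} together with Proposition \ref{prop:description_B}, the locus of $Y_-^\flat$ parametrizing the sinks of orbits meeting $\mathcal U$ is identified with the geometric quotient $X_-=B_+/\C^*$ of Remark \ref{rem:cobordism}, and symmetrically $Y_+^\flat$ corresponds locally to $X_+=B_-/\C^*$. Since $r=2$, every orbit in $(B_+\cap B_-)/\C^*$ connects a point of $Y_+^\flat$ to a point of $Y_-^\flat$ without passing through $Y$, so on this common open subset $\psi$ agrees with the toric flip $X_-\dashrightarrow X_+$ of Lemma \ref{lem:flip}. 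By Definition \ref{toricatiyahflip} that flip is of Atiyah type if and only if every $q_i$ equals $1$, which by Definition \ref{def:equalized} is equivalent to the $\C^*$-action being equalized at $Y$. Finally, by Remark \ref{rem:exc_locus} the exceptional locus $Z_-^\flat$ of $\psi$ is covered by neighborhoods of this type as $Y$ ranges over the finitely many inner components, and both directions of the equivalence follow.

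The main obstacle I anticipate is the geometric matching carried out in the second and third paragraphs: namely, producing the equivariant slice $\mathcal U\cong\C^{n+1}$ with precisely the weight grouping of Setup \ref{setupcobordism}, and then verifying that the abstract quotients $B_\pm/\C^*$ appearing in the toric cobordism reassemble globally into the slices of $Y_\pm^\flat$ cut out by the orbits traversing $\mathcal U$. Once this compatibility is secured, the criticality $r=2$ assumption rules out intermediate fixed components interfering with these orbits, and the Atiyah-versus-equalized dichotomy becomes a direct inspection of the weight list.
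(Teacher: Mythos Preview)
Your proposal is correct and follows essentially the same route as the paper: both arguments linearize the action at a point of an inner component via the Bia{\l}ynicki--Birula slice (the paper cites \cite[Theorem 2.5]{BB}, you call it the equivariant slice theorem), identify the local model with the $\C^*$-action of Setup~\ref{setupcobordism}, and then read off the Atiyah/non-equalized dichotomy directly from Definition~\ref{toricatiyahflip}. The only organizational difference is that the paper treats the two implications separately (one by contradiction starting from a non-equalized inner component, the other starting from a point $z\in Z_-^\flat$ and following its unique orbit back to the relevant inner component), whereas you package both directions into a single local identification; the content is the same.
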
 
\begin{proof}
We recall by Lemma \ref{lem:descend_action} that $Y_-^{\flat}$, $Y_+^{\flat}$ are respectively the sink and source of the $\C^*$-action on $X^\flat$, and as observed in Remark \ref{rem:exc_locus} the map $\psi$ is an isomorphism in codimension one, having $Z_-^{\flat}$ as exceptional locus. Being $r=2$ by assumption, we deduce that the fixed locus of this action is given by $Y_-^{\flat} \sqcup Y_1 \sqcup Y_+^{\flat}$, where $Y_1$ is the union of the irreducible inner components, which are all associated to the same weight. Assume that $\psi$ is locally a toric Atiyah flip. Suppose by contradiction that there exists an irreducible component $Y^{\prime}$ of $Y_1$ on which the action is non-equalized. Choose a point $p\in Y^{\prime}$. Using \cite[Theorem 2.5]{BB} there exists an analytic neighborhood $\cU\subset X$ of $p$, which is $\C^*$-invariant and biholomorphic to $\cN_{Y^{\prime}\cap \cU|X^{\flat}}\simeq \C^{\dim{X^{\flat}}}$. Let us consider the following two geometric quotients of $\cU$:
$$U_{-}=\{y\in Y_-^{\flat}|\,\, y=\lim_{t\to \infty} tx, \,\, x\in \cU\} \ \ \text{and} \ \ U_{+}=\{y\in Y_+^{\flat}|\,\, y=\lim_{t\to 0} tx, \,\, x\in \cU\}.$$
Notice that locally in $\cU$ the $\C^*$-action is as in Set-up \ref{setupcobordism}. Since the weights of the $\C^*$-action on $\cU$ corresponds to the weights of the $\C^*$-action on $\cN_{Y^{\prime}\cap \cU|X^{\flat}}$ and the action on $Y^{\prime}$ is non-equalized by assumption, we deduce that $\psi_{\mid U_{-}}\colon U_-\dashrightarrow U_{+}$ is a toric non-equalized flip, hence a contradiction. 

For the converse, we take a point $z\in Z_-^{\flat}$ and we prove that there exists an open subset of $U_{-}(z)$ of $z$ contained in $Y_-^{\flat}$ such that $\psi_{\mid U_{-}(z)}$ is a toric Atiyah flip. Since $Z_-^{\flat}\subset Y_-^{\flat}$, and by Theorem \ref{thm:BB_decomposition} one has $X^{-}(Y_-^{\flat})\simeq \cN_{Y_-^{\flat}|X^{\flat}}$, it follows that there exists a unique orbit $C$ having sink in $z$. The source $z^{\prime}$ of $C$ sits in one among the fixed point component of $Y_1$. Denote by $\bar{Y}$ such a component. Using again \cite[Theorem 2.5]{BB}, we may find an analytic neighborhood $\cU(z^{\prime})$ of $z^{\prime}$ which is $\C^*$-invariant and biholomorphic to $\cN_{\bar{Y}\cap \cU(z^{\prime})|X^{\flat}}\simeq \C^{\dim{X^{\flat}}}$, and we take two geometric quotients $U_{\pm}(z)$ of $\cU(z^{\prime})$ defined as above. By assumption, the $\C^*$-action is equalized at $\bar{Y}$, then it follows that $\psi_{\mid U_{-}(z)}\colon U_{-}(z) \dashrightarrow U_{+}(z)$ is a toric Atiyah flip, and we conclude. 
\end{proof}
\begin{remark} Let $X$ be as in Theorem \ref{thm:main}, and take a point $p$ in an inner component $Y^{\prime}$. Let $\cU\subset X$ be a $\C^*$-invariant analytic neighborhood of $p$ as in the above proof and define $U_{\pm}$ from $\cU$ in the same way. Using that $\C^*$ acts on $\cU$ as in Set-up \ref{setupcobordism}, we may run the construction developed in section \ref{sec:toric_bordism} to obtain that there exists a toric variety $X_{\tilde{\Sigma}}$ such that $U_-\hookrightarrow X_{\tilde{\Sigma}}\hookleftarrow U_+$ is a toric bordism. If the action is non-equalized at $Y^{\prime}$ then $U_-\dashrightarrow U_+$ is a toric non-equalized flip, therefore our construction is a generalization of the one introduced in \cite[$\S$5.3]{WORS1} where the toric bordism has been studied in the context of toric Atiyah flip. This phenomenon can be described by means of the following picture:
\begin{center}
	\includegraphics[scale=0.13]{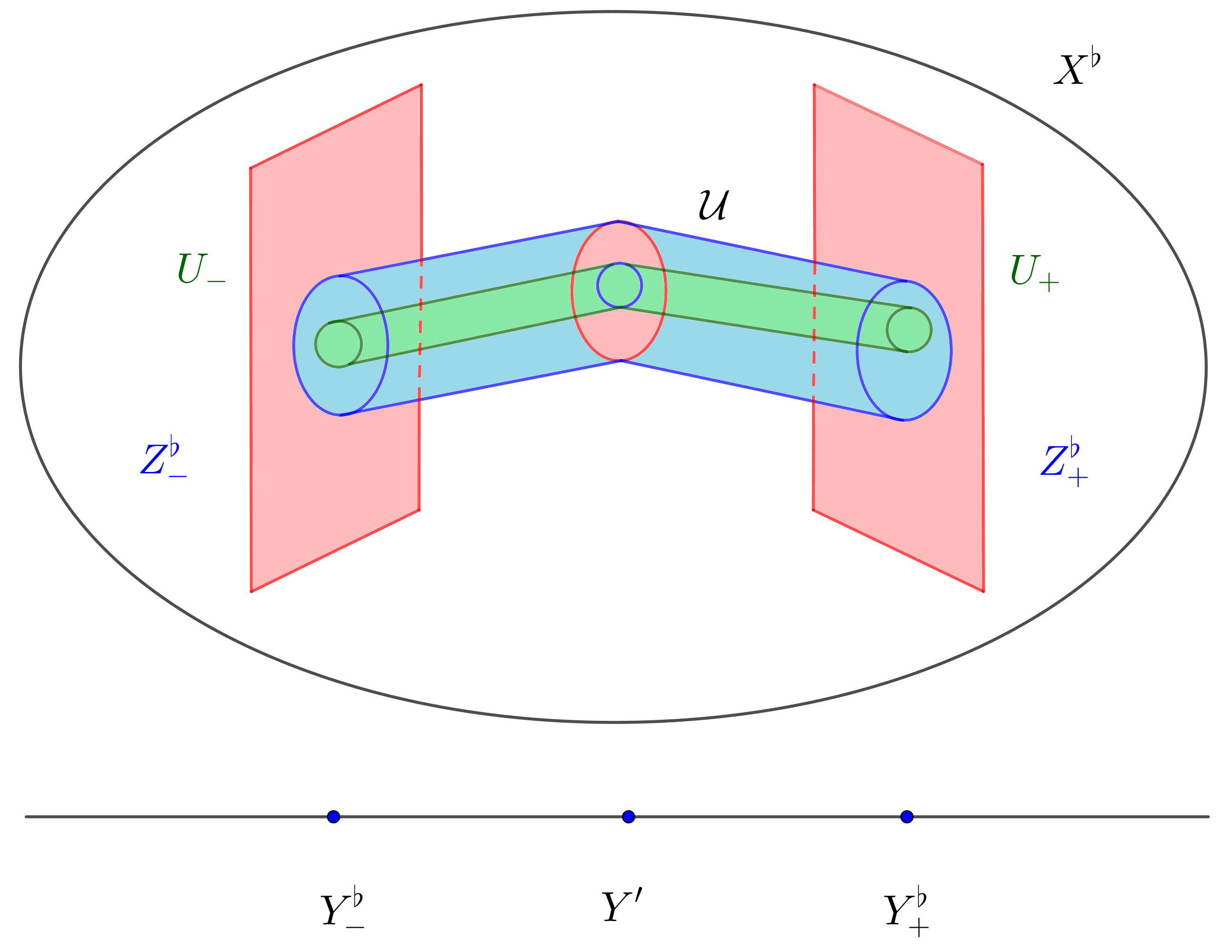}
\end{center}
\end{remark}

The following result is an easy consequence of Theorem \ref{thm:main} that will be used in our applications of section \ref{sec:examples}.
\begin{corollary} \label{cor:main} Let $X$ be a smooth projective variety with a $\C^*$-action of criticality $r=2$ as in Setup \ref{setup_main}. Assume that $\rho_X=1$ and that $\dim{Y_{\pm}}>0$. Then $\psi$ is locally a toric Atiyah flip if and only if the $\C^*$-action on $X^\flat$ is equalized at every inner component. 
\end{corollary}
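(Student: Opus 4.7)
The plan is to derive Corollary~\ref{cor:main} directly from Theorem~\ref{thm:main} by showing that the two extra hypotheses $\rho_{X}=1$ and $\dim Y_{\pm}>0$ force $X^{\flat}$ to be a bordism. First, Lemma~\ref{lem:descend_action} tells us that the extremal fixed components of the induced $\C^{*}$-action on $X^{\flat}$ are the exceptional divisors $Y^{\flat}_{\pm}$ of the two weighted blow-ups, so they have codimension one in $X^{\flat}$ and the action is automatically of B-type. What remains is to check that $\nu^{\pm}(Y)\ge 2$ at every inner fixed component of $X^{\flat}$.

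I would next reduce this inequality to the corresponding statement on $X$ itself. The weighted blow-up $\beta\colon X^{\flat}\to X$ is an isomorphism away from $Y_{-}\cup Y_{+}$, so the inner components of $X^{\flat}$ are in a canonical $\C^{*}$-equivariant bijection with those of $X$ and retain the same splitting of their normal bundles; hence it suffices to verify $\nu^{\pm}_{X}(Y')\ge 2$ for every inner component $Y'\subset X$. Arguing by contradiction, suppose $\nu^{+}(Y')=1$ for some such $Y'$ (the case $\nu^{-}(Y')=1$ being symmetric). Then the closure $D:=\overline{X^{+}(Y')}$ is a $\C^{*}$-invariant subvariety of $X$ of dimension $\dim Y'+1$, generically a $\P^{1}$-fibration over $Y'$ whose fibres $C_{p}$ join a point $p\in Y'$ to a point of $Z_{-}(Y')\subset Y_{-}$; the mirror construction on $\overline{X^{-}(Y')}$ produces curves $C'_{p}$ running from $Y_{+}$ to $p\in Y'$.

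The main obstacle is to convert these ingredients into a numerical contradiction with $\rho_{X}=1$. My plan here is to apply the AM vs FM equality of Lemma~\ref{lem:AMvsFM} to the three curves $C_{p}$, $C'_{p}$ and to a generic orbit $\ell_{0}$ running from $Y_{+}$ to $Y_{-}$, expressing their $L$-degrees in terms of the critical weights $a_{0}<a_{1}<a_{2}$ and of the tangent weights at $Y_{\pm}$ and $Y'$. Because $\rho_{X}=1$, the classes $[C_{p}]$, $[C'_{p}]$ and $[\ell_{0}]$ in $N_{1}(X)$ are proportional to a single generator; combined with the fact that the broken chain $C'_{p}\cup C_{p}$ arises as a $\C^{*}$-invariant specialisation of $\ell_{0}$ (made available by $\dim Y_{\pm}>0$, which lets the source and sink of $\ell_{0}$ vary nontrivially and sweep through $Y'$), this forces the identity $[\ell_{0}]=[C_{p}]+[C'_{p}]$ in $N_{1}(X)$. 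Plugging this relation back into the AM vs FM formulas then constrains the single tangent weight associated to $\nu^{+}(Y')=1$ in a way that is incompatible with the integrality/positivity conditions imposed by the orbit structure, producing the contradiction. Once $\nu^{\pm}\ge 2$ is established at every inner component, $X^{\flat}$ is a bordism and Theorem~\ref{thm:main} applies verbatim. I expect the most delicate step to be the verification of the specialisation identity $[\ell_{0}]=[C_{p}]+[C'_{p}]$ in $N_{1}(X)$, which is likely to require either a Chow-theoretic deformation argument or the invocation of a dedicated result on families of rational curves coming from $\C^{*}$-orbits, in the spirit of \cite[\S3]{WORS1}.
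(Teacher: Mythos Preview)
Your overall plan matches the paper's exactly: reduce the corollary to Theorem~\ref{thm:main} by verifying that $X^{\flat}$ is a bordism, which (after noting that the weighted blow-up is an isomorphism away from $Y_{\pm}$ and hence leaves the inner components and their normal splittings unchanged) amounts to showing $\nu^{\pm}(Y')\ge 2$ for every inner component $Y'\subset X$. The paper dispatches this last inequality in one line by invoking \cite[Lemma~2.6(1)]{WORS1}, which gives precisely the implication $\rho_X=1$, $\dim Y_{\pm}>0 \Rightarrow \nu^{\pm}(Y')\ge 2$.

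Your attempt to prove this inequality from scratch has a genuine gap. The degeneration identity $[\ell_0]=[C_p]+[C'_p]$ is not established: the hypothesis $\dim Y_{\pm}>0$ lets the endpoints of a generic orbit move, but that alone does not produce a flat $\C^*$-invariant family of orbit closures specialising through $Y'$; you would need to construct such a family or cite a result that does. More seriously, even if you grant that identity, no contradiction follows from the AM vs FM relations you assemble. Writing them out gives
\[
\frac{a_2-a_0}{\delta_0}=\frac{a_1-a_0}{w}+\frac{a_2-a_1}{\delta'},
\]
a single equation in three positive integers $\delta_0,w,\delta'$, which is perfectly consistent with $\nu^{+}(Y')=1$ for many choices of weights (e.g.\ $a_0=0$, $a_1=1$, $a_2=2$, $\delta_0=w=\delta'=1$). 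The phrase ``incompatible with the integrality/positivity conditions'' is doing work that the equations themselves do not do. The actual obstruction to $\nu^{\pm}(Y')=1$ under $\rho_X=1$ is not a degree/weight identity of this sort; it is the structural result from \cite{WORS1}, and you should either cite it or reproduce its proof rather than the curve-counting argument you sketch.
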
 

\begin{proof} If $\rho_X=1$ and $\dim{Y_{\pm}}>0$ then \cite[Lemma 2.6 (1)]{WORS1} implies that $\nu^\pm(Y)\geq 2$ for every inner component $Y$. Since the weighted blow-up along the sink and the source does not change the values $\nu^\pm(Y)$, it follows that $X^{\flat}$ is a bordism. Then the statement follows by Theorem \ref{thm:main}.
\end{proof}

We may easily extends the above results to arbitrary criticality, by requiring that every inner component $Y\in \cY$ has the following property: there exist orbits having sink at $Y_{-}$ and source at $Y$ and there exists orbits having sink at $Y$ and source at $Y_{+}$; moreover do not exist closures of orbits joining $Y$ with other fixed components different from $Y_{\pm}$. We will formulate this property, by using the following partial order among the fixed components in $\cY$ (see \cite{BBS1}):
\begin{equation} \label{partial_order}
Y\prec Y^{\prime}\Leftrightarrow \exists \ x\in X:\
\lim_{t\rightarrow 0} t x\in Y{\rm \ \ and \ \
}\lim_{t\rightarrow \infty} t x\in Y^{\prime} .
\end{equation}

By requiring such a property among the fixed point components, one may run the same proof of Theorem \ref{thm:main} to get the following:

\begin{corollary} \label{cor:maincrit} Let $X$ be a smooth projective variety as in Setup \ref{setup_main}, such that $X^{\flat}$ is a bordism. Moreover, assume that for every inner component $Y\in \cY$ one has that $Y_-$ is the unique fixed component such that $Y\prec Y_-$, and $Y_+$ is the unique fixed component such that $Y_+\prec Y$. Then $\psi$ is locally a toric Atiyah flip if and only if the $\C^*$-action on $X^\flat$ is equalized at every inner component. 
\end{corollary}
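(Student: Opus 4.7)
The plan is to mimic the proof of Theorem~\ref{thm:main} step by step, with the partial order hypothesis replacing the role played by the criticality $r=2$ assumption. First I would record that Lemma~\ref{lem:descend_action} still applies (its proof never used the criticality), so $Y_\pm^\flat$ are the sink and source of the extended B-type action on $X^\flat$; combined with the bordism assumption and Remark~\ref{rem:exc_locus}, this yields that $\psi$ is an isomorphism in codimension one with exceptional locus $Z_-^\flat$.

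The heart of the argument is the same local linearization used before. Given any inner component $Y'$ of the $\C^*$-action on $X^\flat$ and a point $p\in Y'$, \cite[Theorem 2.5]{BB} produces a $\C^*$-invariant analytic neighborhood $\cU$ of $p$ biholomorphic to $\cN_{Y'\cap\cU|X^\flat}$, so that the action on $\cU$ has the form described in Set-up~\ref{setupcobordism}. I then define $U_\pm$ as in the proof of Theorem~\ref{thm:main}, namely as the geometric quotients of the analogues of $B_\pm$ inside $\cU$. The partial order assumption (that $Y_+$ is the only component with $Y_+\prec Y'$ and $Y_-$ the only one with $Y'\prec Y_-$) ensures that within $\cU$ all relevant orbits travel from $Y_+^\flat$ through $Y'$ to $Y_-^\flat$ without interacting with other inner components, so that $U_\pm$ are genuine open subsets of $Y_\mp^\flat$ and the toric flip produced by Lemma~\ref{lem:flip} literally describes $\psi$ restricted to $U_-$.

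With this local picture in place, the two directions reduce to copies of the arguments in Theorem~\ref{thm:main}. For the ``only if'' direction I argue by contradiction: non-equalization at some inner $Y'$ would make the induced local flip $U_-\dashrightarrow U_+$ non-equalized in the sense of Definition~\ref{toricatiyahflip}, contradicting that $\psi$ is locally a toric Atiyah flip. For the converse, given $z\in Z_-^\flat$ I follow the orbit with sink $z$ back to its source $z'$; the partial order hypothesis guarantees that $z'$ lies in a well-defined inner component $\bar Y$, and equalization at $\bar Y$ together with the local linearization at $z'$ exhibits $\psi$ near $z$ as a toric Atiyah flip.

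The main obstacle is precisely justifying this local reduction. In higher criticality, the neighborhood $\cU$ around an inner component $Y'$ could a priori contain orbits joining $Y'$ to other inner components, in which case the quotients $U_\pm$ would not sit cleanly inside $Y_\mp^\flat$ and the local toric model of Set-up~\ref{setupcobordism} would no longer capture $\psi$ faithfully. The partial order condition is tailored exactly to rule this out, confining all orbits through $\cU$ to the three-tier picture $Y_+^\flat\leadsto Y'\leadsto Y_-^\flat$ and restoring the structure on which the proof of Theorem~\ref{thm:main} is built.
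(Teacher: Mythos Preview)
Your proposal is correct and follows exactly the route the paper indicates: the paper does not give a separate proof for Corollary~\ref{cor:maincrit} but simply remarks that, under the stated partial order hypothesis, one may run the same proof as Theorem~\ref{thm:main}. Your write-up does precisely this, and your identification of the partial order condition as the device that forces the three-tier picture $Y_+^\flat\leadsto Y'\leadsto Y_-^\flat$ around each inner component (so that the local quotients $U_\pm$ land in $Y_\mp^\flat$) is the correct and intended justification.
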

\section{Examples}\label{sec:examples}

\subsection{Non-equalized actions admitting an Atiyah flip} \label{Atiyah_example}

We now study in detail examples of $\C^*$-action of criticality $2$ on a smooth quadric, non-equalized at the extremal components but equalized at the inner component, such that the birational map $\psi$ among the exceptional divisors of the sink and the source as defined in Lemma \ref{lem:birational_map}, can be locally described as a toric Atiyah flip.

Let us consider the quadric hypersurface $Q^{2n-1}=Z(x_0x_{n+1}+\ldots+x_{n-1}x_{2n}+x_n^2)\subset \P^{2n}$; and a family of $\C^*$-actions, which we will denote by $\alpha_k$, defined as follows, for $k=1,\ldots,n$:
$$        \alpha_k\colon H_k\times \P^{2n}\to \P^{2n} $$ 
$$        (t,p)\to [tp_0:\ldots:tp_{k-1}:p_{k}:\ldots:p_n:t^{-1}p_{n+1}:\ldots:t^{-1}p_{n+k}:p_{n+k+1}:\ldots:p_{2n}].$$
Clearly $Q^{2n-1}$ is $H_k$-invariant, and we may describe some properties of such action as we will see in the following discussion. We abuse notation by setting $Q^{-1}\coloneqq\emptyset$.

For the sake of notation, we will denote a point $p\in \P^{2n}$ by
	$$p=[p_+:p_0:p_-],$$
where $p_+,p_0$ and $p_-$ represent the coordinates on which $H_k$ acts with respectively positive, zero and negative weights. We also set $\P^{k-1}_-:=\{p\in \P^{2n}\mid p=[p_+:0:0]\}$ and $\P^{k-1}_+:=\{p\in \P^{2n}\mid p=[0:0:p_-]\}$.

\begin{lemma} \label{lem:prelim_ex1}
The fixed locus of the $H_k$-action on $Q^{2n-1}$ is $(Q^{2n-1})^{H_k}=\P^{k-1}_-\sqcup Q^{2n-2k-1}\sqcup \P^{k-1}_+$ where $\P^{k-1}_{-}$, $\P^{k-1}_+$ correspond respectively to the sink and the source.
\end{lemma}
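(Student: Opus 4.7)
The strategy is to compute the fixed locus in the ambient $\P^{2n}$ using the weight decomposition of the linear $H_k$-action, then to intersect each fixed component with $Q^{2n-1}$. A point $[x_0:\cdots:x_{2n}]\in\P^{2n}$ is $H_k$-fixed if and only if all of its non-vanishing homogeneous coordinates share the same weight under $\alpha_k$. Since the weights take values in $\{+1,0,-1\}$, the fixed locus $(\P^{2n})^{H_k}$ is the disjoint union of three projective linear subspaces: $\P^{k-1}_-$ (weight $+1$, coordinates $x_0,\dots,x_{k-1}$), an intermediate subspace $\Pi_0\cong\P^{2n-2k}$ (weight $0$, coordinates $x_k,\dots,x_n,x_{n+k+1},\dots,x_{2n}$), and $\P^{k-1}_+$ (weight $-1$, coordinates $x_{n+1},\dots,x_{n+k}$).

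Next I intersect each component with the quadric. Each monomial of $q=\sum_{i=0}^{n-1} x_i x_{n+1+i}+x_n^2$ either pairs a weight-$(+1)$ variable $x_i$ (for $i<k$) with the weight-$(-1)$ variable $x_{n+1+i}$, or involves only weight-zero variables (for $k\leq i\leq n-1$, together with $x_n^2$). Hence $q$ vanishes identically on $\P^{k-1}_\pm$, so both subspaces are contained in $Q^{2n-1}$. Restricted to $\Pi_0$, the defining equation reduces to $\sum_{i=k}^{n-1}x_i x_{n+1+i}+x_n^2=0$, a non-degenerate quadratic form of rank $2(n-k)+1$ in the $2n-2k+1$ zero-weight coordinates, which cuts out a smooth quadric $Q^{2n-2k-1}\subset \Pi_0$. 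Combining the three contributions yields the decomposition $(Q^{2n-1})^{H_k}=\P^{k-1}_-\sqcup Q^{2n-2k-1}\sqcup \P^{k-1}_+$.

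It remains to identify the extremal components. For a generic $p=[p_+:p_0:p_-]\in Q^{2n-1}$, rescaling gives $\lim_{t\to\infty} t\cdot p=[p_+:0:0]\in\P^{k-1}_-$ and $\lim_{t\to 0} t\cdot p=[0:0:p_-]\in\P^{k-1}_+$, so Remark \ref{rem:sinksource} identifies $\P^{k-1}_-$ with the sink and $\P^{k-1}_+$ with the source. No step should present a genuine obstacle; the only subtlety worth flagging is the sign convention, under which the positive-weight eigenspace of the linearisation becomes the sink of the action, explaining why the positive-weight subspace carries the label $\P^{k-1}_-$.
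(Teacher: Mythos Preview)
Your proof is correct and follows essentially the same approach as the paper: compute the fixed locus of the ambient $\P^{2n}$ as three linear subspaces, intersect each with the quadric, and identify the sink and source via the limits of a generic orbit. Your version is simply more explicit in checking why $q$ vanishes on $\P^{k-1}_\pm$ and why the restriction to $\Pi_0$ is non-degenerate.
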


\begin{proof}
The fixed locus $(Q^{2n-1})^{H_k}$ is obtained by intersecting the quadric with $$(\P^{2n})^{H_k}=\P^{k-1}_-\sqcup \P^{2n-2k}\sqcup \P^{k-1}_+,$$ and using that $\P^{k-1}\cap Q^{2n-1}=\P^{k-1}$, and that $\P^{2n-2k}\cap Q^{2n-1}=Q^{2n-2k-1}$. We are left to observe that $\P^{k-1}_{\pm}$ are the sink and the source of the $H_k$-action on $\P^{2n}$. Indeed, given a general point $p\in \P^{2n}$, we easily obtain that 
	$$\lim_{t\to \infty} tp=\lim_{t\to \infty} [tp_+:p_0:t^{-1}p_+]=\lim_{t\to \infty} [p_+:t^{-1}p_0:t^{-2}p_+]=[p_+:0:0],$$
	which belongs to $\P^{k-1}_-$ by construction, and similarly $\lim_{t\to 0} tp \in \P^{k-1}_+$, hence the claim. 
\end{proof} 

According with the notation of the previous sections, we keep denoting by $Y_{\pm}$ the sink and the source of the action.

\begin{proposition}\label{bnbandwidth}
	The action of $H_k$ on $Q^{2n-1}$ has criticality $2$.
\end{proposition}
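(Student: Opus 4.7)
The plan is to linearize the natural ample line bundle $L=\cO_{\P^{2n}}(1)|_{Q^{2n-1}}$ with respect to $\alpha_k$, compute the weight $\mu_L(Y)$ at each of the three fixed components identified in Lemma \ref{lem:prelim_ex1}, and verify that the three values obtained are distinct; this will yield a chain $a_0<a_1<a_2$ and hence criticality $r=2$.

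Concretely, I would begin by lifting $\alpha_k$ to the canonical $\C^*$-action on $\C^{2n+1}$ with weights $+1$ on the coordinates $p_0,\dots,p_{k-1}$, weights $-1$ on $p_{n+1},\dots,p_{n+k}$, and weight $0$ on the remaining coordinates. This lift determines a linearization of $\cO_{\P^{2n}}(1)$ which restricts to a linearization of $L$ on the invariant subvariety $Q^{2n-1}$.

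Then, using the standard fact that at a fixed point $p$ with non-vanishing homogeneous coordinate $p_i$ of weight $w_i$ one has $\mu_L(p)=-w_i$, I would read off the weights on each component. On $\P^{k-1}_-$ the non-vanishing coordinates sit among $p_0,\dots,p_{k-1}$, giving $\mu_L(\P^{k-1}_-)=-1$; on $Q^{2n-2k-1}$ every non-vanishing coordinate has weight $0$, so $\mu_L(Q^{2n-2k-1})=0$; and on $\P^{k-1}_+$ the non-vanishing coordinates lie among $p_{n+1},\dots,p_{n+k}$, so $\mu_L(\P^{k-1}_+)=+1$. The ordering $-1<0<1$ is compatible with the identification of sink and source from Lemma \ref{lem:prelim_ex1}, as required by $L$ being ample, and it can be cross-checked via Lemma \ref{lem:AMvsFM} applied to any one-dimensional orbit joining two of the fixed components.

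Since exactly three distinct weights arise on $(Q^{2n-1})^{H_k}$, the chain of weights has length three, giving $r=2$. No step here presents a genuine obstacle; the only care-point is fixing the sign convention in the definition of $\mu_L$, which is handled once and for all by the lift chosen above.
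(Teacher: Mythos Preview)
Your proposal is correct and follows essentially the same approach as the paper: choose $L=\cO_{Q^{2n-1}}(1)$, compute $\mu_L(Y_\pm)=\pm 1$ and $\mu_L(Q^{2n-2k-1})=0$, and conclude $r=2$. The only addition in the paper is a one-line remark that $\Pic(Q^{2n-1})\simeq\Z$ makes the criticality independent of the choice of ample $L$; conversely, your version spells out the explicit lift and the rule $\mu_L(p)=-w_i$ that the paper leaves implicit.
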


\begin{proof}
Since $\Pic{Q^{2n-1}}\simeq \Z$, the criticality of the $H_k$ on $Q^{2n-1}$ does not depend on the chosen ample line bundle on $Q^{2n-1}$. For simplicity, consider $L=\cO_{Q^{2n-1}}(1)$. We obtain that $\mu_{L}(Y_{\pm})=\pm 1$, and $\mu_{L}(Q^{2n-2k-1})=0$, hence the statement. 
\end{proof}

\begin{remark} \label{rem:BW2}
Keep the notation of Proposition \ref{bnbandwidth} and of its proof. By definition, the bandwidth of the $H_k$-action on the polarized pair $(Q^{2n-1},L)$ is $\mu_{L}(Y_+)-\mu_{L}(Y_-)=2$.
\end{remark}

\begin{lemma} \label{lem:equalized}
The $H_k$-action on $Q^{2n-1}$ is equalized if and only if $k=1$.
\end{lemma}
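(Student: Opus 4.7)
My plan is to reduce the claim to a direct weight computation at each of the three types of fixed components listed in Lemma \ref{lem:prelim_ex1}: the sink $\P^{k-1}_-$, the source $\P^{k-1}_+$, and the inner component $Q^{2n-2k-1}$ (present when $k<n$). At each of them I will identify the normal directions inside $TQ^{2n-1}$ and read off the $H_k$-weights; the action is equalized precisely when every non-tangent weight is $\pm 1$.

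I would start with the sink. Fix $p_0=[1:0:\cdots:0]\in Y_-$ and work in the affine chart $x_0=1$. In this chart the defining equation $x_0x_{n+1}+\cdots+x_{n-1}x_{2n}+x_n^2=0$ is solvable for $x_{n+1}$, so $x_1,\ldots,x_n,x_{n+2},\ldots,x_{2n}$ form local coordinates on $Q^{2n-1}$ around $p_0$. Normalizing by $x_0$, which has weight $+1$, shifts every weight by $-1$, producing weight $0$ on $x_1,\ldots,x_{k-1}$ (which span $T_{p_0}Y_-$), weight $-1$ on $x_k,\ldots,x_n$ and on $x_{n+k+1},\ldots,x_{2n}$, and weight $-2$ on $x_{n+2},\ldots,x_{n+k}$. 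Hence $\cN^-(Y_-)_{p_0}$ carries only weights equal to $-1$ iff the last range is empty, i.e.\ iff $k=1$. A symmetric calculation in the chart $x_{n+1}=1$ around a point of $Y_+$ yields weights $+1$ and $+2$ and the same conclusion.

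For the inner component, I would pick any $q\in Q^{2n-2k-1}$; every coordinate indexed by $\{0,\ldots,k-1\}\cup\{n+1,\ldots,n+k\}$ vanishes at $q$. For each such $i$, the partial derivative of the defining equation with respect to $x_i$ evaluates to another coordinate from this set, hence vanishes at $q$. Therefore the vectors $\partial/\partial x_i$, for $i$ in those index sets, are tangent to $Q^{2n-1}$ at $q$, generate the normal bundle of $Q^{2n-2k-1}$ in $Q^{2n-1}$, and carry $H_k$-weights $\pm 1$; thus the action is equalized at the inner component for every $k$. Combining the three cases gives equalization everywhere iff $k=1$. I do not foresee any substantial obstacle; the only delicate points are the weight shift when passing to an affine chart and the careful bookkeeping of which index ranges can be empty.
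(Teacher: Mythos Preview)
Your argument is correct: the affine-chart weight computation at the sink (and symmetrically at the source) shows that a weight $-2$ direction appears precisely when the index range $\{n+2,\ldots,n+k\}$ is nonempty, i.e.\ when $k\geq 2$, and your tangency check at the inner component verifies that $\cN_{Q^{2n-2k-1}|Q^{2n-1}}$ carries only weights $\pm 1$ for every $k$.

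Your route differs from the paper's. For $k\neq 1$ the paper does not compute weights in a chart; instead it exhibits a single orbit whose closure is a line joining $\P^{k-1}_-$ directly to $\P^{k-1}_+$ (a point of the form $[p_+:0:p_-]$), and then applies the AM vs FM formula (Lemma~\ref{lem:AMvsFM}) with $L=\cO_{Q^{2n-1}}(1)$ to read off $\delta=2$ at the source of that orbit. For $k=1$ the paper invokes an external classification result (\cite[Theorem~4.1]{RW}) for equalized bandwidth-two actions with isolated extremal points, rather than checking the weights directly. Your approach is more self-contained---it avoids the citation and gives all the weights explicitly---and as a bonus it proves that the action is equalized at the inner component for every $k$, a fact the paper only establishes later (in the proof of Proposition~\ref{prop:ex1}) via another AM vs FM computation. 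The paper's approach, on the other hand, is shorter and illustrates how the AM vs FM lemma detects non-equalization from a single well-chosen orbit.
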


\begin{proof}
If $k=1$, the sink and the source are two isolated points, and using Remark \ref{rem:BW2} and \cite[Theorem 4.1]{RW} we deduce that the action is equalized. 

Assume that $k\neq 1$, and take $L=\cO_{Q^{2n-1}}(1)$. Consider a point $p=[p_+:0:p_-]\in Q^{2n-1}$, and denote by $C$ the closure of the orbit $H_k\cdot p$. Then $C$ is the line of the form $[tp_+:0:t^{-1}p_-]$, for $t\in\C^*$, and applying Lemma \ref{lem:AMvsFM} we get $2=\delta(\tilde{p})\deg L$, where $\tilde{p}$ is the source of $C$. Since $\deg L=1$, one has $\delta(\tilde{p})=2$, thus the action is non-equalized.
\end{proof}
\begin{remark} Under the above notation, assume that $k=1$. Then by Lemmas \ref{lem:equalized} and \ref{lem:prelim_ex1} we know that the action is equalized and the sink and source are two isolated fixed points. If we consider the blow-up of $Q^{2n-1}$ along the extremal fixed components (cf. Remark \ref{rem:wblow_vs_bl}), then the birational map $\psi$ of Lemma \ref{lem:birational_map} corresponds to the identity $\P^{2n-2}\to \P^{2n-2}$; thus we will be interested in the cases $k\neq 1$. 
\end{remark}

\begin{proposition} \label{prop:normal}
	The normal bundle of $Y_{\pm}$ can be decomposed as
	$$\cN_{Y_{\pm}\mid Q^{2n-1}}=\Omega_{Y_{\pm}}(2)\oplus \cO_{Y_{\pm}}(1)^{\oplus 2n-2k+1}.$$
\end{proposition}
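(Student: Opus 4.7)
The plan is to compute the normal bundle via the conormal sequence for the chain of inclusions $Y_{-}\hookrightarrow Q^{2n-1}\hookrightarrow \mathbb{P}^{2n}$. Explicitly, I would start from the short exact sequence
\begin{equation*}
0 \lra \cN_{Y_{-}|Q^{2n-1}} \lra \cN_{Y_{-}|\PP^{2n}} \lra \cN_{Q^{2n-1}|\PP^{2n}}\big|_{Y_{-}} \lra 0.
\end{equation*}
Since $Y_{-}\simeq \PP^{k-1}$ is a linear subspace cut out by the $2n-k+1$ hyperplanes $x_k=\dots=x_{2n}=0$, we have $\cN_{Y_{-}|\PP^{2n}}\simeq \cO_{Y_{-}}(1)^{\oplus 2n-k+1}$, with the factors indexed by the coordinates $x_k,\ldots,x_{2n}$; likewise $\cN_{Q^{2n-1}|\PP^{2n}}|_{Y_-}\simeq \cO_{Y_{-}}(2)$, and the connecting morphism is given by the restriction to $Y_-$ of the partial derivatives of $Q=\sum_{i=0}^{n-1} x_i x_{n+i}+x_n^2$ in the normal directions.

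Next I would compute these partials on $Y_{-}=\{x_k=\cdots=x_{2n}=0\}$. The derivative $\partial_{x_j} Q$ is either $x_{n+j}$ (for $j<n$), $2x_n$ (for $j=n$), or $x_{j-n-1}$ (for $j>n$). Restricting to $Y_{-}$, only the components corresponding to indices $j=n+1,\ldots,n+k$ survive, and these yield exactly the coordinates $x_0,\ldots,x_{k-1}$ on $Y_{-}$, viewed as sections of $\cO_{Y_{-}}(1)$. Thus the map $\cO(1)^{\oplus 2n-k+1}\to \cO(2)$ is block-diagonal: it is zero on the $2n-2k+1$ factors indexed by $x_k,\ldots,x_n,x_{n+k+1},\ldots,x_{2n}$, and on the remaining $k$ factors indexed by $x_{n+1},\ldots,x_{n+k}$ it coincides with the map $(s_0,\dots,s_{k-1})\mapsto \sum s_i x_i$.

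Finally, I would identify the kernel blockwise. The $2n-2k+1$ zero blocks contribute a direct summand $\cO_{Y_{-}}(1)^{\oplus 2n-2k+1}$ to the kernel. On the remaining $k$ factors, the restriction is precisely the $\cO_{Y_{-}}(1)$-twist of the Euler sequence on $Y_{-}\simeq \PP^{k-1}$,
\begin{equation*}
0 \lra \Omega_{Y_{-}}(2) \lra \cO_{Y_{-}}(1)^{\oplus k} \lra \cO_{Y_{-}}(2) \lra 0,
\end{equation*}
whose kernel is $\Omega_{Y_{-}}(2)$. Hence $\cN_{Y_{-}|Q^{2n-1}}\simeq \Omega_{Y_{-}}(2)\oplus \cO_{Y_{-}}(1)^{\oplus 2n-2k+1}$, and the same argument (swapping the roles of the positively and negatively weighted blocks of coordinates) gives the result for $Y_{+}$. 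The only mildly delicate point is the bookkeeping that splits the normal coordinates into those that pair with vanishing variables (contributing the trivial summand) and those that pair with the $k$ coordinates of $Y_{\pm}$ (producing the Euler map); everything else is a formal consequence of the conormal sequence and the twisted Euler sequence.
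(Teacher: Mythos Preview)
Your argument is correct and follows the same overall strategy as the paper: both start from the normal bundle sequence for $Y_{\pm}\subset Q^{2n-1}\subset \PP^{2n}$, identify the middle and right terms as $\cO_{Y_{\pm}}(1)^{\oplus 2n-k+1}$ and $\cO_{Y_{\pm}}(2)$, and then use the twisted Euler sequence to recognise $\Omega_{Y_{\pm}}(2)$ inside the kernel. The execution differs slightly, and your version is a bit more direct. You compute the map $\cO(1)^{\oplus 2n-k+1}\to\cO(2)$ explicitly via the partial derivatives of the quadratic form, see that it is block-diagonal (zero on the $2n-2k+1$ factors indexed by $x_k,\dots,x_n,x_{n+k+1},\dots,x_{2n}$ and equal to the Euler map on the $k$ factors indexed by $x_{n+1},\dots,x_{n+k}$), and read off the direct sum decomposition of the kernel immediately. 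The paper instead argues abstractly that the map is given by a generating set of sections of $\cO(1)$, builds a commutative diagram to obtain the exact sequence
\[
0\lra \Omega_{Y_{\pm}}(2)\lra \cN_{Y_{\pm}|Q^{2n-1}}\lra \cO_{Y_{\pm}}(1)^{\oplus 2n-2k+1}\lra 0,
\]
and then invokes the vanishing $\Ext^1(\cO(1),\Omega(2))\simeq H^1(\Omega(1))=0$ to split it. Your coordinate computation makes that $\Ext$ step unnecessary. (A tiny index slip: for $j<n$ one has $\partial_{x_j}Q=x_{n+1+j}$ rather than $x_{n+j}$, but this does not affect the conclusion, since those derivatives vanish on $Y_{\pm}$ in either case.)
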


\begin{proof} We recall by Lemma \ref{lem:prelim_ex1} that $Y_-\simeq Y_+\simeq \P^{k-1}$; set $Y:=Y_{\pm}$. Notice that $\mathcal{N}_{Y\mid \P^{2n}}=\mathcal{O}_{Y}(1)^{\oplus 2n-k+1}$ and $\mathcal{N}_{Q^{2n-1}\mid \mathbb{P}^{2n}}|_{Y}=\mathcal{O}_{Y}(2)$. Then we have the following exact sequence $$0 \to \mathcal{N}_{Y\mid Q^{2n-1}} \to \mathcal{O}_{Y}(1)^{\oplus 2n-k+1}\to \mathcal{O}_{Y}(2)\to 0.$$ Being $\Hom{(\mathcal{O}_{Y}(1), \mathcal{O}_{Y}(2))}\simeq H^0(Y,\mathcal{O}_{Y}(1))$, the surjective map $\mathcal{O}_{Y}(1)^{\oplus 2n-k+1}\to \mathcal{O}_{Y}(2)$ is given by a set of sections $f_0,\dots, f_{2n-k}$ which generate $H^0(Y,\mathcal{O}_{Y}(1))$. For simplicity, assume that $f_0,\dots,f_k$ give a basis of $H^0(Y,\mathcal{O}_{Y}(1))$.  Let us consider the exact sequence:
$$ 0 \to \cO_{Y}(1)^{\oplus k} \to \mathcal{O}_{Y}(1)^{\oplus 2n-k+1} \to \mathcal{O}_{Y}(1)^{\oplus 2n-2k+1} \to 0.$$ 	
Using that there exist surjective maps $\cO(1)^{\oplus k} \to \mathcal{O}_{Y}(2)$, $\mathcal{O}_{Y}(1)^{\oplus 2n-k+1} \to \mathcal{O}_{Y}(2)$, we get the following commutative diagram:
\begin{center}
		\begin{tikzcd}
			\Omega_{Y}(2) \arrow[r] \arrow[d]  & \mathcal{O}_{Y}(1)^{\oplus k} \arrow[r] \arrow[d]                                             & \cO_{Y}(2) \arrow[d, no head, Rightarrow, no head]         &  &   \\
			\mathcal{N}_{Y\mid Q^{2n-1}} \arrow[r] \arrow[d]                      & \mathcal{O}_{Y}(1)^{\oplus 2n-k+1} \arrow[d] \arrow[r]                               & \cO_{Y}(2) &  &  \\
			\mathcal{O}_{Y}(1)^{\oplus 2n-2k+1} \arrow[r, no head, Rightarrow, no head]  & \mathcal{O}_{Y}(1)^{\oplus 2n-2k+1}   &  &  
		\end{tikzcd}
	\end{center}
	From the above diagram we obtain the exact sequence: 
	$$0 \to \Omega_{Y}(2) \to \mathcal{N}_{Y\mid  Q^{2n-1}} \to \mathcal{O}_{Y}(1)^{\oplus 2n-2k+1}\to 0,$$
	Being $ \Ext^1(\mathcal{O}_{Y}(1)^{\oplus 2n-2k+1},\Omega_{Y}(2)) = \bigoplus^{2n-2k+1}\text{H}^1(Y,\Omega_{Y}(1)) = 0,$
	the claim follows.
\end{proof}


\begin{proposition} \label{prop:ex1}
Consider the weighted blow-up of $Q^{2n-1}$ along the sink $Y_-$ and source $Y_+$ with respect to the $H_k$-action, with $k\neq 1$. Denote by $Y_{\pm}^{\flat}$ the corresponding exceptional divisors. Then $Y_{\pm}^{\flat}\simeq \P_{\alpha_k}({(\Omega_{Y_{\pm}}(2)\oplus \cO_{Y_{\pm}}(1)^{\oplus 2n-2k+1})}^\vee)$, and the birational map introduced in Lemma \ref{lem:birational_map}
	$$\P_{\alpha_k}({(\Omega_{Y_{-}}(2)\oplus \cO_{Y_{-}}(1)^{\oplus 2n-2k+1})}^\vee)\stackrel{\psi}{\dashrightarrow} \P_{\alpha_k}({(\Omega_{Y_{+}}(2)\oplus \cO_{Y_{+}}(1)^{\oplus 2n-2k+1}})^\vee),$$
	which associates to every point $p\in Y_-^\flat$ the source of the unique orbit having $p$ as sink, is locally a toric Atiyah flip.	
\end{proposition}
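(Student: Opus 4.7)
The plan is to handle the two assertions separately, each by a short application of earlier material. For the identification of the exceptional divisors, I will combine Remark \ref{rem:wb_extremal}, which gives $Y_\pm^\flat \simeq \P_{\alpha_k}(\cN_{Y_\pm \mid Q^{2n-1}}^\vee)$, with Proposition \ref{prop:normal}, which identifies this normal bundle with $\Omega_{Y_\pm}(2) \oplus \cO_{Y_\pm}(1)^{\oplus 2n-2k+1}$; together these yield the claimed formula.

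For the statement that $\psi$ is locally a toric Atiyah flip, my plan is to apply Corollary \ref{cor:main}. Its hypotheses are straightforward: $\rho_{Q^{2n-1}}=1$ since $\Pic{Q^{2n-1}}\simeq \Z$, the criticality equals $2$ by Proposition \ref{bnbandwidth}, and $\dim Y_\pm = k-1 > 0$ because $k \neq 1$. What remains is the equalization of the induced $\C^*$-action on $X^\flat$ at every inner component. By Lemma \ref{lem:prelim_ex1} the only inner fixed component of the $H_k$-action on $Q^{2n-1}$ is $Q^{2n-2k-1}$, which is disjoint from $Y_-\sqcup Y_+$. Hence the weighted blow-up $\beta\colon X^\flat\to Q^{2n-1}$ is a $\C^*$-equivariant isomorphism in an invariant neighborhood of $Q^{2n-2k-1}$, so no new inner components are introduced and the equalization check reduces to the corresponding statement on $Q^{2n-1}$.

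To conclude, I will apply the AM vs FM equality (Lemma \ref{lem:AMvsFM}) with the primitive ample line bundle $L=\cO_{Q^{2n-1}}(1)$. As in the proof of Proposition \ref{bnbandwidth}, the three linearization weights are $-1$, $0$, $+1$, attained at $Y_-$, $Q^{2n-2k-1}$, $Y_+$ respectively. Since these are the only fixed components, the other endpoint of any one-dimensional orbit closure $\overline{C}\simeq\P^1$ meeting $Q^{2n-2k-1}$ must lie in $Y_-\sqcup Y_+$, and the corresponding bandwidth across $\overline{C}$ equals $1$. Since $\deg L|_{\overline{C}}$ and $\delta$ are both positive integers, AM vs FM forces $\delta=\deg L|_{\overline{C}}=1$. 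By Remark \ref{rem:eq} this triviality of isotropy on every one-dimensional orbit through $Q^{2n-2k-1}$ is precisely equalization at that inner component, and Corollary \ref{cor:main} yields the conclusion. I do not foresee a real obstacle: the substantive work has been absorbed into Proposition \ref{prop:normal} and Corollary \ref{cor:main}, and the verification here reduces to the observation that $Q^{2n-2k-1}$ is separated from the blown-up loci.
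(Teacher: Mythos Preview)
Your proposal is correct and follows essentially the same approach as the paper: identify $Y_\pm^\flat$ via Proposition~\ref{prop:normal} (together with Remark~\ref{rem:wb_extremal}), invoke Proposition~\ref{bnbandwidth} for criticality two, use Lemma~\ref{lem:AMvsFM} to verify equalization at the inner component, and conclude with Corollary~\ref{cor:main}. Your write-up is simply more explicit than the paper's in checking the hypotheses of Corollary~\ref{cor:main} and in explaining why the equalization question on $X^\flat$ reduces to the one on $Q^{2n-1}$.
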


\begin{proof} By Proposition \ref{prop:normal} one has $Y_{\pm}^{\flat}\simeq \P_{\alpha_k}({(\Omega_{Y_{\pm}}(2)\oplus \cO_{Y_{\pm}}(1)^{\oplus 2n-2k+1})}^\vee)$.  Proposition \ref{bnbandwidth} tells us that the $H_k$-action has criticality two, and 
applying Lemma \ref{lem:AMvsFM} we deduce that the $H_k$-action is equalized at the inner component. Therefore, Corollary \ref{cor:main} gives the statement. 
\end{proof}

\subsection{Non-equalized action admitting a non-equalized flip}\label{example_noneq}

We present an example of a non-equalized $\C^*$-action of criticality $2$, such that the birational map among the exceptional divisors of the weighted blow-up at the sink and the source (see Lemma \ref{lem:birational_map}) is locally described by a toric non-equalized flip.

Keeping the notation introduced at the beginning of subsection \ref{Atiyah_example}, we start by considering the $H_n$-action on $Q^{2n-1}$, that by Lemma \ref{lem:prelim_ex1} has fixed locus $(Q^{2n-1})^{H_n}=\P^{n-1}_-\sqcup \P^{n-1}_+$, where the sign $\pm$ denote respectively the sink and the source of the action. Therefore, we may consider the induced action of $H_n$ on every Grassmannian $G(\P^{i-1},Q^{2n-1})$, for $i\in\{1,\ldots,n\}$, as the restriction of the induced $H_n$-action on $\P(\bigwedge^{i+1} V)\simeq \P^{N}$, where $\dim{V}=2n+1$ and $N=\binom{2n+1}{i+1}-1$, via the Pl\"ucker embedding. In the following discussion, we set $X:=G(\P^1,Q^{2n-1})$.

\begin{lemma} \label{lem:fixedlocus_ex2}
	Consider the induced $H_n$-action on $X$. Then the fixed locus decomposes as
	$$X^{H_n}=G(\P^1,\P^{n-1}_-)\sqcup \P(T_{\P^{n-1}}) \sqcup G(\P^1,\P^{n-1}_+).$$
\end{lemma}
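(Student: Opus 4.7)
The plan is to describe the $H_n$-fixed lines in $Q^{2n-1}$ directly from the weight decomposition of $V = \C^{2n+1}$, and then identify geometrically the three families that arise. First, I would observe that an $H_n$-fixed point of $X$ is the same as an $H_n$-invariant line $L \subset Q^{2n-1}$, and that invariance means the corresponding $2$-dimensional subspace $W \subset V$ is $H_n$-stable. Decomposing $V = V_+ \oplus V_0 \oplus V_-$ into eigenspaces of weights $+1, 0, -1$ (of dimensions $n, 1, n$, spanned respectively by $e_0, \dots, e_{n-1}$, by $e_n$, and by $e_{n+1}, \dots, e_{2n}$), every stable $W$ splits compatibly as $W = W_+ \oplus W_0 \oplus W_-$.

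Next, I would enumerate the five possible dimension vectors $(\dim W_+, \dim W_0, \dim W_-)$ summing to $2$ and check which ones yield lines $\P(W) \subset Q^{2n-1}$. The case $(2,0,0)$ gives lines in $\P(V_+) = \P^{n-1}_- \subset Q$, automatically in $Q$ since the sink is contained in the quadric; this contributes $G(\P^1, \P^{n-1}_-)$. The case $(0,0,2)$ is symmetric and contributes $G(\P^1, \P^{n-1}_+)$. The cases $(1,1,0)$ and $(0,1,1)$ each force the line to pass through $[e_n]$, where the quadratic form $\sum_{i=0}^{n-1} x_i x_{n+1+i} + x_n^2$ evaluates to $1$, so $[e_n] \notin Q$ and these cases do not contribute. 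In the remaining case $(1,0,1)$, writing $W = \langle v, w\rangle$ with $v \in V_+$ and $w \in V_-$, the condition $\P(W) \subset Q$ reduces, after evaluating the quadratic form at $\alpha v + \beta w$, to the vanishing of the bilinear pairing $B(v,w) = \sum_{i=0}^{n-1} v_i w_{n+1+i}$.

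Then, I would identify the parameter space in the mixed case with $\P(T_{\P^{n-1}})$. Since $B$ is nondegenerate, it provides an isomorphism $V_- \simeq V_+^*$, and the parameter space becomes the standard incidence variety
$$I = \{([v],[w]) \in \P(V_+) \times \P(V_+^*) : w(v) = 0\}.$$
The projection $I \to \P(V_+) \simeq \P^{n-1}$ is a $\P^{n-2}$-bundle whose fiber over $[v]$ equals $\P\bigl((V_+/\langle v\rangle)^*\bigr)$; via the Euler sequence this realizes $I$ as the projectivization of the (co)tangent bundle of $\P^{n-1}$, giving the desired identification with $\P(T_{\P^{n-1}})$. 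The three components are mutually disjoint since they correspond to distinct triples $(\dim W_+, \dim W_0, \dim W_-)$, yielding the claimed decomposition.

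The main obstacle will be matching the explicit incidence variety $I$ with the notation $\P(T_{\P^{n-1}})$ used in the statement: this depends on the convention adopted for the projectivization of a vector bundle, but in either convention $I$ is a $\P^{n-2}$-bundle over $\P^{n-1}$ naturally associated with the tangent bundle through the Euler sequence, so the identification is routine once the convention is fixed.
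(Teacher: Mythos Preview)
Your proposal is correct and follows essentially the same approach as the paper: both identify $H_n$-fixed points of $X$ with $H_n$-invariant lines in $Q^{2n-1}$, find $G(\P^1,\P^{n-1}_\pm)$ immediately, and describe the mixed family via the nondegenerate pairing between $V_+$ and $V_-$ induced by the quadratic form (the paper phrases this as the isomorphism $p_-\mapsto H(p_-)$ from $\P^{n-1}_-$ to its dual, you phrase it as $V_-\simeq V_+^*$ and the incidence variety). Your systematic case analysis on $(\dim W_+,\dim W_0,\dim W_-)$ is slightly more explicit than the paper, which tacitly relies on $(Q^{2n-1})^{H_n}=\P^{n-1}_-\sqcup\P^{n-1}_+$ to exclude lines through $[e_n]$; otherwise the arguments coincide.
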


\begin{proof} The fixed components of $X$ under the $H_n$-action are precisely the $H_n$-invariant lines with respect to the $H_n$-action on $Q^{2n-1}$; therefore we find the varieties $G(\P^1,\P^{n-1}_{\pm})$. We are left to study the $H_n$-invariant lines from $\P^{n-1}_-$ to $\P^{n-1}_+$. Note that the intersection between $Q^{2n-1}$ and the subspace generated by $\P^{n-1}_{\pm}$ is a quadric $Q^{2n-2}$. Consider therefore a point $p_{-}\in \P^{n-1}_-$, and the set:
	$$H(p_{-}):=\{p_+\in \P^{n-1}_+\mid \overline{p_{-} p_{+}}\in Q^{2n-2}\}.$$
	It is easy to see that $H(p_{-})$ is an hyperplane in $\P^{n-1}_+$, and that the map
	\begin{equation*}
		\begin{split}
			D: \P^{n-1}_- &\to (\P^{n-1}_-)^\vee\\
			p_{-}&\mapsto H(p_{-})
		\end{split}
	\end{equation*}
	is an isomorphism, therefore the $H_n$-invariant lines from $\P^{n-1}_-$ to $\P^{n-1}_+$ are given by the choice of a point and the associated hyperplane, that is the variety $\P(T_{\P^{n-1}})$.
\end{proof}

According with the notation of the previous sections, we will denote by $Y_{\pm}=G(\P^1,\P^{n-1}_{\pm})$; the reason is clear thanks to the following:	

\begin{lemma}\label{lem:prelim_ex2}
	The $H_n$-action on $X=G(\P^1,Q^{2n-1})$ has criticality $2$. Moreover $Y_-$ and $Y_+$ are respectively the sink and the source of the action.
\end{lemma}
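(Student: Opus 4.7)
The plan is to produce an ample linearized line bundle on $X$ and compute the weights of $\mu_L$ on the three fixed components identified in Lemma~\ref{lem:fixedlocus_ex2}. I would use the Pl\"ucker embedding $X=G(\P^1, Q^{2n-1})\hookrightarrow \P(\bigwedge^2 V)$ with $V=\C^{2n+1}$, and take $L=\cO_X(1)$ (the restriction of the tautological hyperplane bundle), which is ample and carries a natural $H_n$-linearization induced from the $H_n$-representation on $V$.

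The first step is to compute the weight decomposition. The action of $H_n$ on $V$ has weights $+1$ on $\langle e_0,\ldots,e_{n-1}\rangle$, $0$ on $\langle e_n\rangle$, and $-1$ on $\langle e_{n+1},\ldots,e_{2n}\rangle$, so the weights appearing on $\bigwedge^2 V$ lie in $\{-2,-1,0,+1,+2\}$. The crucial observation is that, for each of the three fixed components of Lemma~\ref{lem:fixedlocus_ex2}, the Pl\"ucker vector of any point of that component sits in a single weight space of $\bigwedge^2 V$: a line contained in $\P^{n-1}_-$ is spanned by two vectors in $\langle e_0,\ldots,e_{n-1}\rangle$ and so has Pl\"ucker weight $+2$; an invariant line in $\P(T_{\P^{n-1}})$ joins a point of $\P^{n-1}_-$ to a point of $\P^{n-1}_+$ and so has Pl\"ucker weight $0$; a line in $\P^{n-1}_+$ has Pl\"ucker weight $-2$. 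Therefore the values of $\mu_L$ on the fixed components form the three-element set $\{-2,0,+2\}$ (up to a global sign coming from the linearization convention), and by definition the criticality of the action on $(X,L)$ equals $r=2$.

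To distinguish sink from source, I would take a generic line $\ell\in X$, write its Pl\"ucker coordinate as $w=w_{+2}+w_{+1}+w_0+w_{-1}+w_{-2}$, and observe that $t\cdot w=t^2 w_{+2}+t w_{+1}+w_0+t^{-1}w_{-1}+t^{-2}w_{-2}$, so that $\lim_{t\to\infty}[t\cdot w]=[w_{+2}]$ as soon as $w_{+2}\neq 0$; by the analysis above, this limit lies in the Pl\"ucker image of $G(\P^1,\P^{n-1}_-)$. The condition $w_{+2}=0$ is equivalent to $\ell$ meeting the subspace $\langle e_n,e_{n+1},\ldots,e_{2n}\rangle$, whose intersection with $Q^{2n-1}$ equals $\P^{n-1}_+$, so it amounts to the Schubert condition that $\ell$ meet $\P^{n-1}_+$. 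A dimension count shows this is a proper closed subvariety of $X$ for $n\geq 2$ (the locus of lines meeting a fixed $\P^{n-1}$ in $Q^{2n-1}$ has dimension at most $(n-1)+(2n-3)=3n-4$, while $\dim X=4n-5$), so generically $w_{+2}\neq 0$ and $Y_-=G(\P^1,\P^{n-1}_-)$ is the sink; the symmetric argument with $t\to 0$ identifies $Y_+=G(\P^1,\P^{n-1}_+)$ as the source. The main technical point is this last dimension estimate, which is a routine Schubert calculus computation.
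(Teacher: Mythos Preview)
Your argument is correct and follows the same route as the paper for the criticality computation: both use the Pl\"ucker embedding $X\hookrightarrow\P(\bigwedge^2 V)$, take $L=\cO_X(1)$, and read off the three weights $\{-2,0,2\}$ on the three fixed components.

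The one place you diverge is in identifying the sink and source. You do this by taking a generic line, computing the limit of its Pl\"ucker coordinate as $t\to\infty$, and then running a Schubert-type dimension count to verify that the top-weight component $w_{+2}$ is generically nonzero. This is correct, but the paper bypasses it entirely: once the weights $\mu_L(Y_\pm)=\pm 2$ and $\mu_L(\P(T_{\P^{n-1}}))=0$ are known, the paper simply invokes the general fact (recalled in Section~\ref{sec:prelim}) that for an ample linearized line bundle the minimum and maximum of $\mu_L$ are always attained at the sink and the source, respectively. Your limit argument buys a self-contained verification that does not rely on that lemma, at the cost of the extra incidence computation; the paper's route is shorter but presupposes the ampleness-implies-extremal-weights principle.
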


\begin{proof}
	We consider the $H_n$-action on the pair $(G(\P^1,Q^{2n-1}),L)$, where $L=\cO_{G(\P^1,Q^{2n-1})}(1)$. As we have observed above, the action of $H_n$ on $G(\P^1,Q^{2n-1})$ is the restriction of the induced $H_n$-action on $\P(\bigwedge^2 V)$, where $\dim V=2n+1$; we will denote by $\{e_0,\ldots,e_{2n}\}$ a basis of $V$.
	
We first notice that $\mu_{L}(Y_-)=-2$; indeed the induced $H_n$-action via the Pl\"ucker embedding will act on $e_i\wedge e_j$ with weight $2$, for $0\leq i < j \leq n-1$. Similarly we get $\mu_{L}(Y_+)=2$, and $\mu_{L}(\P(T_{\P^{n-1}}))=0$. Therefore by definition we obtain that the criticality of the $H_k$-action is $2$. Moreover, $Y_{-}$ and $Y_{+}$ are the sink and the source of the action, because their associated weights $\mu_{L}(Y_{-})$ and $\mu_{L}(Y_{+})$ are respectively the minimum and the maximum values of $\mu_{L}$.
\end{proof}

\begin{remark} Under the notation of Lemma \ref{lem:prelim_ex2} and of its proof, the bandwidth of the $H_n$-action on $(X,L)$ is $\mu_{L}(Y_+)-\mu_{L}(Y_-)=4$.
\end{remark}

Let us now show that the $H_n$-action on $X$ is non-equalized at the inner component $\P(T_{\P^{n-1}})$; in the following notation the exponent denotes the occurrence of the corresponding weight.

\begin{lemma} \label{lem:prelim2}
	The weights of the $H_n$-action on the normal bundle $\cN_{\P(T_{\P^{n-1}})\mid X}$ are $(\pm 1,\pm 2^{n-2})$.
\end{lemma}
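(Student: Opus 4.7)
The plan is to pick a single representative point of the inner component $W = \P(T_{\P^{n-1}})$, compute the weights of the $H_n$-action on the full tangent space of $X$ at that point, and then isolate the normal contribution by removing the weight-zero part. Because $W$ is connected and the $\C^*$-action on it is trivial, $\cN_{W\mid X}$ carries a $\C^*$-equivariant decomposition whose weight structure is constant along $W$, so a single fixed point suffices.

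I would take $V_{\ell_0} = \langle e_0, e_{n+2}\rangle \subset V$, which is $H_n$-invariant (since $e_0, e_{n+2}$ have $H_n$-weights $+1, -1$) and isotropic (since $B(e_0, e_{n+2}) = 0$ for the symmetric bilinear form $B$ polarizing the defining equation of $Q^{2n-1}$); this represents a line $\overline{[e_0][e_{n+2}]}$ that belongs to $W$ via the incidence description of Lemma \ref{lem:fixedlocus_ex2}. Using the orthogonal Grassmannian structure on $X = OG(2, V)$, the tangent space $T_{X, V_{\ell_0}}$ is the subspace of $f \in \Hom(V_{\ell_0}, V/V_{\ell_0})$ such that the induced bilinear form $(v, v') \mapsto B(f(v), v')$ on $V_{\ell_0}$ is alternating.

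Writing $f(e_0) = \sum_{j \ne 0, n+2} c_j e_j$ and $f(e_{n+2}) = \sum_{j \ne 0, n+2} d_j e_j$, the alternating condition becomes the three scalar equations $c_{n+1} = 0$, $d_1 = 0$, and $c_1 + d_{n+1} = 0$. On the other hand, rescaling the perturbation $e_0 + \epsilon e_j$ under $t$ shows that $c_j$ transforms with weight $w_j - 1$ and $d_j$ with weight $w_j + 1$, where $w_j$ is the $H_n$-weight on $e_j$ (namely $+1$ for $1 \leq j \leq n-1$, $0$ for $j = n$, and $-1$ for $n+1 \leq j \leq 2n$). Collecting surviving coordinates after the three constraints, the nonzero weights are $-1$ (from $c_n$), $+1$ (from $d_n$), $-2$ with multiplicity $n-2$ (from $c_{n+3}, \ldots, c_{2n}$), and $+2$ with multiplicity $n-2$ (from $d_2, \ldots, d_{n-1}$), while the weight-$0$ space has dimension $(n-1) + (n-2) = 2n-3 = \dim W$, the $d_{n+1}$ coordinate being absorbed into $c_1$ by the third constraint.

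Finally, I would check that the weight-$0$ space coincides with $T_{W, V_{\ell_0}}$ by differentiating the parametrization of $W$ given in Lemma \ref{lem:fixedlocus_ex2}: a tangent vector to $W$ at $([e_0],[e_{n+2}])$ corresponds to a pair of directions in $\P^{n-1}_-$ and $\P^{n-1}_+$ satisfying the linearization of the incidence relation $B(p_-, p_+) = 0$, which produces exactly the $2n-3$ equations above. The complementary weights then furnish $\cN_{W\mid X}\bigr|_{V_{\ell_0}}$ with the asserted structure $(\pm 1, \pm 2^{n-2})$, and equivariance along $W$ extends this to the whole bundle. The main obstacle I expect is the careful bookkeeping of the alternating constraint: it identifies two weight-$0$ directions $c_1$ and $d_{n+1}$, and without tracking this one would overcount the weight-$0$ part and fail to match the dimensions on both sides of $\dim X = 4n-5$.
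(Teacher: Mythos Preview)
Your proposal is correct but takes a genuinely different route from the paper. The paper argues geometrically: at a point $s\in Y_1$, it identifies explicit $H_n$-invariant curves in $X$ (an $(n-2)$-dimensional family of lines, plus a single conic) joining $s$ to each extremal component, and applies the AM vs FM equality (Lemma \ref{lem:AMvsFM}) to read off the weight of the action on the tangent direction to each such curve at $s$; a dimension count using $\nu^\pm(Y_1)=n-1$ shows that these curves exhaust $\cN^\pm$. Your approach bypasses the orbit geometry entirely: you work directly with the standard model of $T_{V_{\ell_0}} OG(2,V)$ as the space of alternating maps, list the weights on all coordinates, and peel off the weight-zero part. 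This is more elementary and self-contained---it needs neither AM vs FM nor the identification of the invariant curve families---at the price of the linear-algebra bookkeeping you correctly anticipate. Note also that your final verification (matching the weight-zero part with $T_{W,V_{\ell_0}}$ by differentiating the incidence parametrization) is in fact automatic: since $X$ is smooth, every fixed-point component is smooth and its tangent space at any point is exactly the weight-zero subspace of the ambient tangent space, so the dimension match $2n-3=\dim W$ already forces equality.
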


\begin{proof}
	Set $Y_{1}:=\P(T_{\P^{n-1}})$ and as in the proof of Lemma \ref{lem:fixedlocus_ex2}, we denote by $H(p)$ the hyperplane in $\P^{n-1}_+$ corresponding to a point $p\in \P^{n-1}_-$. Let us compute the weights of the $H_n$-action on $\cN^{+}_{Y_1|X}$. To this end, take a point $s\in Y_{1}$ and let us denote by $p_-,p_+$ its intersection with $\P^{n-1}_-,\P^{n-1}_+$ respectively. Then the family of pencils of lines in $Q^{2n-1}$ containing $s$ and a line $r\in G(\P^1,\P^{n-1}_{+})$ is parametrized by the lines passing by $p_+$ contained in the hyperplane $H(p_-)\subset \P^{n-1}_+$. Since $H(p_-)\simeq \P^{n-2}$ we deduce that such a family is parametrized by a $\P^{n-3}$ in $Y_+$. This implies that we have $(n-2)$-independent directions from $s$ that correspond to orbits lying in $X^{-}(Y_{1})$. Denote by $\Gamma$ the closure of one among these orbits. Noticing that $\Gamma$ has sink at $Y_-$, and using Lemmas \ref{lem:prelim_ex2} and \ref{lem:AMvsFM}, we compute that the weight of the $H_n$-action on the tangent bundle of $\Gamma$ at $p_-$ is 2. 
	
Moreover, since it is readily seen from the computation of the rank of $\cN_{Y_1\mid X}$ that $\nu^+(Y_1)=n-1$,
	by Lemma \ref{lem:AMvsFM} one may find a $H_n$-invariant non-singular conic linking $s$ with $Y_+$, and now the weight of the $H_n$-action on the tangent bundle of such a conic at $p_-$ is 1. Then, applying Theorem \ref{thm:BB_decomposition} we deduce that the positive weights of $H_n$ on $s$ are $(1,2^{n-2})$. Running a symmetric argument replacing $G(\P^1,\P^{n-1}_{+})$ with $G(\P^1,\P^{n-1}_{-})$ we conclude that the weights of the $H_n$-action on $\cN^{-}_{Y_1|X}$ are $(-1,-2^{n-2})$, hence the statement.
\end{proof}

In order to construct explicitly the birational map among the exceptional divisors associated to the sink and the source, we compute the normal bundle of $G(\P^1, \P^{n-1})$ in $X=G(\P^1,Q^{2n-1})$.

\begin{proposition} \label{prop:normal_ex2}
	Let $\cQ$ and $\cS$ be respectively the universal quotient bundle and the universal subbundle of $Y:=G(\P^1,\P^{n-1})$. Then $\cN_{Y|X}=\cN^{\prime}\oplus \cQ$ where $\cN^{\prime}$ is the Atiyah extension of $\cO_{Y}(1)$ by $\Omega_{Y}(1)$. 
\end{proposition}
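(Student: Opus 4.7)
The plan is to realise $Y$ inside the ambient Grassmannian $\widetilde{X}:=G(\P^1,\P^{2n})$ and dissect the normal bundle via the chain $Y\subset X\subset\widetilde{X}$, splitting the result with the induced $\C^*$-action. Let $V=\C^{2n+1}$ be the vector space underlying $\P^{2n}$, and let $W\subset V$ be the $n$-dimensional $q$-isotropic subspace with $\P(W)=\P^{n-1}$. First, I would compute the two ambient normal bundles: since $X$ is the zero scheme inside $\widetilde{X}$ of the regular section of $\sym^2\widetilde{\cS}^\vee$ induced by the quadratic form $q$, and this bundle has rank three matching the codimension, the Koszul resolution yields $\cN_{X|\widetilde{X}}=\sym^2\widetilde{\cS}^\vee|_X$. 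A standard short exact sequence of Grassmannian tangent bundles gives $\cN_{Y|\widetilde{X}}=\cS^\vee\otimes(V/W)$, obtained by dividing $\mathcal{H}om(\cS,V/\cS)|_Y$ by $T_Y=\mathcal{H}om(\cS,W/\cS)$.

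The next step is to analyse the connecting homomorphism $\varphi$ in the exact sequence of normal bundles
\begin{equation*}
0\to\cN_{Y|X}\to\cS^\vee\otimes(V/W)\xrightarrow{\varphi}\sym^2\cS^\vee\to 0.
\end{equation*}
Pointwise, $\varphi$ sends a deformation $\phi:\ell\to V/W$ to the quadratic form $v\mapsto q(v,\phi(v))$, which is well defined because $W$ is $q$-isotropic. The map $\varphi$ therefore factors through the quadric-induced surjection $V/W\twoheadrightarrow W^\vee$ with one-dimensional kernel $W^\perp/W$, then through the surjection $W^\vee\twoheadrightarrow\cS^\vee$ dual to the tautological inclusion with kernel $\cQ^\vee$, and finally through the symmetrisation $\cS^\vee\otimes\cS^\vee\twoheadrightarrow\sym^2\cS^\vee$ whose kernel is $\wedge^2\cS^\vee=\cO_Y(1)$. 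Reassembling the resulting three-step filtration gives an exact sequence
\begin{equation*}
0\to\cS^\vee\otimes(W^\perp/W)\to\cN_{Y|X}\to\cN'\to 0,
\end{equation*}
where $\cN'$ itself fits into $0\to\cS^\vee\otimes\cQ^\vee\to\cN'\to\cO_Y(1)\to 0$. Using the rank-two identity $\cS^\vee\cong\cS\otimes\det\cS^\vee$, the leftmost term rewrites as $\Omega_Y(1)$, so $\cN'$ is an extension of $\cO_Y(1)$ by $\Omega_Y(1)$.

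Finally, I would split the outer sequence by means of the induced $\C^*$-action. The line $W^\perp/W$ is spanned by the weight-zero coordinate $x_n$ of $\P^{2n}$, whereas the coordinates defining $W^\vee$ carry non-zero weight; consequently $\cS^\vee\otimes(W^\perp/W)$ and $\cN'$ are the two distinct $\C^*$-weight eigen-subbundles of $\cN_{Y|X}$, and the outer extension splits canonically. Identifying the rank-two bundle $\cS^\vee\otimes(W^\perp/W)$ with the universal quotient $\cQ$ (in the convention used for $Y$) yields the desired decomposition $\cN_{Y|X}\cong\cN'\oplus\cQ$. To recognise $\cN'$ as the Atiyah extension, I would use that $Y$ has Picard rank one to conclude $\Ext^1(\cO_Y(1),\Omega_Y(1))\cong H^1(Y,\Omega_Y)\cong\C$, so that any non-split extension of $\cO_Y(1)$ by $\Omega_Y(1)$ is represented by a non-zero scalar multiple of the Atiyah class. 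The main obstacle will be verifying the non-splitness of $\cN'$; I would handle this by restricting to a projective line $L\subset Y$ and comparing $\cN'|_L$ to the Euler sequence on $L$, or alternatively by a direct Borel--Bott--Weil computation on the Grassmannian $G(2,n)$.
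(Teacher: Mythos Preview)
Your approach is correct and takes a genuinely different route from the paper. The paper invokes an external description of $T_X$ on the orthogonal Grassmannian (the unsplit sequence $0\to\cQ\otimes\cU\to T_X\to\cO_X(1)\to 0$ from \cite{LM}), restricts it to $Y$, compares with $T_Y$ to extract $\cN_{Y|X}$, and then splits off the rank-two summand via the cohomological vanishing $\Ext^1(\cO_Y(1),\cQ)=H^1(\cQ(-1))=0$. You instead use the normal-bundle sequence of the triple $Y\subset X\subset\widetilde X=G(\P^1,\P^{2n})$, factor the connecting map $\cS^\vee\otimes(V/W)\to\sym^2\cS^\vee$ through the chain $V/W\twoheadrightarrow W^\vee\twoheadrightarrow\cS^\vee$, and split the resulting filtration by the $\C^*$-weight decomposition of $\cN_{Y|X}$ (the two graded pieces carry distinct weights $-1$ and $-2$ at the sink). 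Your argument is more self-contained---no reliance on \cite{LM}---and makes the equivariant structure explicit; the paper's $\Ext$-vanishing is quicker once that input is granted.

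One point to tighten: the rank-two summand your filtration actually produces is $\cS^\vee\otimes(W^\perp/W)\cong\cS^\vee$, which under the standard conventions for $G(2,n)$ is \emph{not} the universal quotient $\cQ$ (that has rank $n-2$). The paper itself treats ``$\cQ$'' as a rank-two bundle in the proof (``since $\cQ$ has rank two, it holds $\cQ\simeq\cQ^\vee(1)$''), so this seems to be a notational idiosyncrasy of the statement rather than an error on your part; still, your phrase ``identifying \ldots\ with the universal quotient $\cQ$'' should be replaced by an honest identification with $\cS^\vee$ and a remark matching the paper's convention. Your plan to check non-splitness of $\cN'$ by restriction to a line is reasonable and in fact goes slightly beyond the paper, which leaves that step implicit in the non-splitness of the sequence imported from \cite{LM}.
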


\begin{proof} It is well known that the tangent bundle $T_{Y}$ of $Y$ is isomorphic to $\cQ\otimes \cS$. Consider the universal sequence on $G(\P^1,\P^{2n})$ and denote by $\bar{S}$ the restriction on $Y$ of the universal subbundle of $G(\P^1,\P^{2n})$. Notice that $\cQ$ is the restriction on $Y$ of the universal quotient bundle of $X$. The quadric form defining the quadric $Q^{2n-1}\subset \P^{2n}$ gives a surjective morphism $\bar{S}\to \cQ$ of bundles over $X$; we denote by $\cU$ its kernel. Using \cite[Proposition 5.1]{LM} we get the unsplit exact sequence:
$$0\longrightarrow \cQ\otimes \cU\longrightarrow T_{X}\longrightarrow \cO_{X}(1)\longrightarrow 0,$$
and considering the restriction on $Y$ of the sequence above, we obtain the following commutative diagram, where we continue to denote by $\cU$ its restriction to $Y$: 
\begin{center}
		\begin{tikzcd}
			\cQ\otimes \cS \arrow[r, "\simeq"] \arrow[d]  & T_{Y} \arrow[d] & \\
			\cQ\otimes \cU \arrow[r] \arrow[d]                      & T_{X\mid Y} \arrow[d] \arrow[r]                               & \cO_{Y}(1) \arrow[d, no head, Rightarrow, no head]   &  &  \\
			\cQ\otimes \cU/\cS \arrow[r]  & \mathcal{N}_{Y|X} \arrow[r]  & \cO_{Y}(1) &  
		\end{tikzcd}
	\end{center}
and one may check that $\cU/\cS=\cS^{\vee}\oplus \cO_{Y}$. Therefore $\cQ\otimes \cU\simeq (\cQ\otimes\cS^\vee)\oplus \cQ$, and from the above diagram we obtain the following exact sequence: 
$$0\longrightarrow  (\cQ\otimes\cS^\vee)\oplus \cQ \longrightarrow \mathcal{N}_{Y|X}\longrightarrow \cO_{Y}(1)\longrightarrow 0.$$
Being $\Ext^1(\cO_Y(1),\cQ)=H^1(\cQ(-1))=0$ (cf. \cite[Proposition 2.3]{AT}) one has:\\ $\Ext^1(\cO_Y(1),(\cQ\otimes\cS^\vee)\oplus \cQ)=\Ext^1(\cO_Y(1),\cQ\otimes \cS^{\vee})=\Ext^1(\cS,\cQ(-1))=\Ext^1(\cS,\cQ^{\vee})\\ \simeq H^1(Y, \cS^{\vee}\otimes \cQ^{\vee})\simeq H^1(Y,\Omega_{Y})\simeq \C$, where we have used that, since $\cQ$ has rank two, it holds $Q\simeq Q^\vee(1)$; therefore we have $\cN_{Y|X}\simeq \cN^{\prime}\oplus \cQ$, where $\cN^{\prime}$ fits in the following exact sequence 
$$0\longrightarrow\cQ^{\vee}(1)\otimes \cS^{\vee}\longrightarrow \cN^{\prime}\longrightarrow \cO_Y(1)\longrightarrow 0,$$ and being $\cQ^{\vee}(1)\otimes \cS^{\vee}\simeq \Omega_{Y}(1)$, we get the statement.

\end{proof}

\begin{proposition} \label{prop:ex2}
Let us take the $H_n$-action on $X=G(\P^1,Q^{2n-1})$ as above. Consider the weighted blow-up of $X$ along the sink $Y_-$ and source $Y_+$ with respect to this action, and denote by $Y_{\pm}^{\flat}$ the corresponding exceptional divisors. Then $Y_{\pm}^{\flat}\simeq \P_{\alpha_n}({\cN^{\prime}}^{\vee}\oplus \cQ^{\vee})$, with $\cN^{\prime}$ and $\cQ$ as in Proposition \ref{prop:normal_ex2}, and the birational map introduced in Lemma \ref{lem:birational_map}
	$$\P_{\alpha_n}(\cN'^\vee\oplus \cQ^\vee)\stackrel{\psi}{\dashrightarrow}\P_{\alpha_n}(\cN'^\vee\oplus\cQ^\vee),$$
	which associates to every point $p\in Y_-^\flat$ the source of the unique orbit having $p$ as sink, is locally a toric non-equalized flip.	
\end{proposition}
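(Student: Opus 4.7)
The plan is to combine the normal bundle computation of Proposition \ref{prop:normal_ex2} with Corollary \ref{cor:main} applied to the $H_n$-action, supplemented by the local analysis underlying the proof of Theorem \ref{thm:main} to promote the failure of the Atiyah condition into the non-equalized conclusion.

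First, I would identify the exceptional divisors. By Proposition \ref{prop:normal_ex2} the normal bundle splits as $\mathcal{N}_{Y_\pm|X}\simeq \mathcal{N}'\oplus \mathcal{Q}$, and since $Y_\pm$ are the extremal components of the $H_n$-action, Remark \ref{rem:wb_extremal} gives $Y_\pm^\flat\simeq \mathbb{P}_{\alpha_n}(\mathcal{N}'^\vee\oplus \mathcal{Q}^\vee)$, where the weights on the fibers are those induced by the decomposition of $\mathcal{N}_{Y_\pm|X}$ into $\mathbb{C}^*$-eigensubbundles. This already yields the first assertion.

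Next I would verify the hypotheses of Corollary \ref{cor:main}. The variety $X=G(\mathbb{P}^1,Q^{2n-1})$ is a rational homogeneous space of Picard rank one, and $\dim Y_\pm = \dim G(\mathbb{P}^1,\mathbb{P}^{n-1})=2(n-2)>0$ for $n\geq 3$. Criticality two was already proved in Lemma \ref{lem:prelim_ex2}. Thus Corollary \ref{cor:main} applies, giving that $\psi$ is locally a toric Atiyah flip if and only if the $\mathbb{C}^*$-action on $X^\flat$ is equalized at every inner component. Since the weighted blow-up $\beta\colon X^\flat\to X$ is an isomorphism outside $Y_-\cup Y_+$, the unique inner component $Y_1=\mathbb{P}(T_{\mathbb{P}^{n-1}})$ is left untouched, and the weights on $\mathcal{N}_{Y_1|X^\flat}$ agree with those on $\mathcal{N}_{Y_1|X}$, which by Lemma \ref{lem:prelim2} are $(\pm 1,\pm 2^{n-2})$. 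Hence the $\mathbb{C}^*$-action on $X^\flat$ is non-equalized at $Y_1$, and $\psi$ is not locally a toric Atiyah flip.

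To upgrade this negative statement to the positive conclusion that $\psi$ is locally a toric non-equalized flip, I would repeat the local argument appearing in the proof of Theorem \ref{thm:main}. Pick $p\in Y_1$ and apply the Bia{\l}ynicki-Birula local linearization (\cite[Theorem 2.5]{BB}) to obtain a $\mathbb{C}^*$-invariant analytic neighborhood $\mathcal{U}$ of $p$ in $X^\flat$ biholomorphic to $\mathcal{N}_{Y_1\cap \mathcal{U}|X^\flat}\simeq \mathbb{C}^{\dim X^\flat}$, on which $\mathbb{C}^*$ acts with the weights $(\pm 1,\pm 2^{n-2},0^{\dim Y_1})$ of Lemma \ref{lem:prelim2}. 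This is exactly the affine toric picture of Set-up \ref{setupcobordism} with non-unit weights, so the two geometric quotients $U_\pm$ of $\mathcal{U}$ are the toric models $X_\mp$ described there, and by Definition \ref{toricatiyahflip} the restriction $\psi_{|U_-}\colon U_-\dashrightarrow U_+$ is a toric non-equalized flip.

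The main obstacle is the last step: one must ensure that the local linearization around $p\in Y_1$ really exhibits the weights of Lemma \ref{lem:prelim2} in the standard toric coordinates of Set-up \ref{setupcobordism}, so that the identification with a toric non-equalized flip is literal and not just by analogy; the remaining verifications (Picard rank one, positive dimension of $Y_\pm$, invariance of $Y_1$ under $\beta$) are essentially bookkeeping.
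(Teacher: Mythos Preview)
Your proof is correct and follows essentially the same route as the paper: identify $Y_\pm^\flat$ via Remark \ref{rem:wb_extremal} and Proposition \ref{prop:normal_ex2}, check the hypotheses of Corollary \ref{cor:main} (criticality two from Lemma \ref{lem:prelim_ex2}, $\rho_X=1$, $\dim Y_\pm>0$), and invoke non-equalization at the inner component via Lemma \ref{lem:prelim2}. The only difference is that the paper stops after citing Corollary \ref{cor:main}, tacitly using the dichotomy built into Definition \ref{toricatiyahflip} (the local model of Theorem \ref{thm:main} always produces a toric flip, which is either Atiyah or non-equalized), whereas you spell out the local linearization step explicitly; this extra care is harmless and arguably clearer.
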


\begin{proof} By Proposition \ref{prop:normal_ex2} one has $Y_{\pm}^{\flat}\simeq  \P_{\alpha_n}({\cN^{\prime}}^{\vee}\oplus \cQ^{\vee})$. Using Lemma \ref{lem:prelim_ex2} we know that the $H_n$-action has criticality two. Since the $H_n$-action is non-equalized at the inner component (cf. Lemma \ref{lem:prelim2}), applying Corollary \ref{cor:main} we obtain the statement. 
\end{proof}

\bibliographystyle{plain}
\bibliography{bibliomin}
\end{document}